\def\blfootnote{\gdef\@thefnmark{}\@footnotetext}
\newcommand{\midarrow}{\tikz \draw[-triangle 90] (0,0) -- +(.1,0);}
\theoremstyle{definition}
\newtheorem{thm}{Theorem}[section]
\newtheorem{lemma}[thm]{Lemma}
\newtheorem{theorem}[thm]{Theorem}
\newtheorem{corollary}[thm]{Corollary}
\newtheorem{proposition}[thm]{Proposition}
\newtheorem{remark}[thm]{Remark}
\newtheorem{example}[thm]{Example}
\newtheorem{definition}[thm]{Definition}
\def\Map{\mathop{\rm Map}\nolimits}
\def\map{\mathop{\rm map}\nolimits}
\def\op{\mathop{\rm op}\nolimits}
\def\diag{\mathop{\rm diag}}
\def\dirlim{\mbox{\,\hbox{lim}\kern-1.5em
                  \lower1.5ex\hbox{$\longrightarrow$}\,}}
\def\R{{\mathrm{R}}}
\def\F{\mathrm{F}}
\def\C{{\mathcal{C}}}
\def\S{\mathcal{S}}
\def\Set{\text{(Sets)}}
\def\sSet{\text{(Simplicial Sets)}}
\def\ssSet{\text{(Bisimplicial Sets)}}
\def\Ch{\text{(Chain Complexes)}}
\def\dCh{\text{(Double Complexes)}}
\def\Top{\text{(topological spaces)}}
\def\nerve{\mathrm{N}}
\def\reeb{\mathcal{G}}
\def\truncreeb{\mathcal{T}}
\def\homology{\mathrm{H}}
\def\page{\mathrm{E}}
\def\id{\mathrm{id}}
\def\pr{\mathrm{pr}}
\def\Tot{\mathrm{Tot}}
\def\Sh{\mathcal{S}_h}
\def\from{\colon}
\def\to{\rightarrow}
\DeclareMathAlphabet{\mathcal}{OMS}{cmsy}{m}{n}	
\DeclareRobustCommand{\coprod}{\mathop{\text{\fakecoprod}}}
\newcommand{\fakecoprod}{%
  \sbox0{$\prod$}%
  \smash{\raisebox{\dimexpr.9625\depth-\dp0}{\scalebox{1}[-1]{$\prod$}}}%
  \vphantom{$\prod$}%
}	
\title{Section complexes of simplicial height functions}
\author{Melvin Vaupel, Erik Hermansen and Paul Trygsland }
\date{}
\begin{document}
\blfootnote{\scriptsize{Email: melvin.vaupel@ntnu.no, erik.hermansen@ntnu.no, paul.trygsland@ntnu.no}}

\blfootnote{\scriptsize{Departement of Mathematical Sciences, Norwegian University of Science and Technology, Trondheim, Norway}}
\maketitle

\begin{abstract}
    \noindent A theory of sections of simplicial height functions is developed. At the core of this theory lies the section complex, which is assembled from higher section spaces. The latter encode flow lines along the height, as well as their homotopies, in a combinatorial way. The section complex has an associated spectral sequence, which computes the homology of the height functions domain. We extract Reeb complexes from the spectral sequence. These provide a first order approximation of how homology generators flow along height levels. Our theory in particular models topological section spaces of piecewise linear functions in a completely combinatorial way. 
\end{abstract}

\section{Introduction}
It is a common theme in mathematics to study properties of a space~$X$ through the lens of a real-valued \textit{height function}~$h \from X \to \R$. The best known example of this is probably Morse theory, where~$X$ is a smooth Riemannian manifold and one considers gradient flow lines of a Morse function $h$. There is also a discrete variant of Morse theory due to Forman \cite{forman1998morse}, in which~$X$ is a simplicial complex and~$h$ assigns a real value to every simplex of~$X$.  \\
Given a height function~$h \from X \to \R$ one can also investigate the space of all \textit{sections} of~$h$. A section is a local right inverse to the height function~$h$. If for example~$X$ is a topological space and~$h \from X \to \mathbb{R}$ a continuous function, then a section of~$h$ is a map~$\sigma \from [a,b] \to X$ such that $h \circ \sigma=\id$. In \cite{trygsland2021} it is explained how these sections form the space of morphisms in a topological category- called the \textit{section category} of~$h$. It is then shown that under fairly mild regularity assumptions on~$h$ the classifying space of the section category is weakly equivalent to the base space~$X$. As a consequence the spectral sequence associated to (the nerve of) the section category then computes the homology of~$X$. 
A similar result for smooth Morse functions has been proven in unpublished work by Cohen, Jones and Segal \cite{cohen1995morse}. In their case, the associated spectral sequence reduces to the widely used Morse homology. \\

In this paper, we introduce a theory of sections for the case where~$X$ is a \textit{simplicial set} and $h \from X \to \R$ a simplicial map taking values in a simplicial model of the real line given by the nerve of the poset category~$(\mathbb{R},\leq)$. The fibers of~$h$ assemble into a simplicial set~$\coprod\nolimits_{a\in \mathbb{R}} h^{-1}a$ that we call the~\emph{space of~$0$--sections}. To capture how these fibers are connected across~$p$ levels, we introduce the~$p$--sections. A~$1$--section from height~$a$ to height~$b$ is a~$1$--simplex in~$X$ starting in~$h^{-1}a$ and ending in~$h^{-1}b$, a~$2$--section is a~$2$--simplex labeled by three height values and so on. There is a simplicial set~$(\S_h)_p$, containing the~$p$--sections as its vertices and their~$q$-homotopies as~$q$-simplices. We refer to it as the~\emph{space of~$p$--sections}. It turns out that the section spaces are also simplicial in~$p$. Thus, we may define a bisimplicial set~$\S_h$, associated with the height function~$h$, called the \emph{section complex} of~$h$. The section complex essentially splits up the homology information of~$X$ into two directions: the horizontal direction along~$h$ and the vertical one transversal to it. This is encapsulated in the following main result:

\begin{theorem}
\label{intro:mainresult}
Let~$h\colon X\rightarrow \R$ be a height function. The diagonal of the bisimplicial set~$\S_h$ is homotopy equivalent to~$X$:
\[
    \diag \S_h\simeq X.
\]
\end{theorem}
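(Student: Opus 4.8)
The plan is to produce an explicit comparison map between $X$ and $\diag\Sh$ and to prove it is a weak equivalence by a levelwise argument in one of the two simplicial directions, invoking the realization lemma for bisimplicial sets. Write $(\Sh)_{p,q}$ with $p$ the number of height levels and $q$ the homotopy dimension, so that $(\Sh)_{p}=(\Sh)_{p,\bullet}$ is the space of $p$-sections; a $(p,q)$-bisimplex is a pair $(c,f)$ consisting of a height chain $c\from\Delta^{p}\to\R$ and a map $f\from\Delta^{p}\times\Delta^{q}\to X$ lying over $c$, meaning that $h\circ f$ equals $c$ composed with the projection $\pr_{1}\from\Delta^{p}\times\Delta^{q}\to\Delta^{p}$. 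Two remarks will drive the proof: first, for $q=0$ the datum $(c,f)$ is simply a simplex $f$ of $X$, the label $c=h\circ f$ being redundant, so $(\Sh)_{\bullet,0}$ is canonically $X$; second, the $q$-slice is the pullback $(\Sh)_{\bullet,q}\cong\Map(\Delta^{q},X)\times_{\Map(\Delta^{q},\R)}\R$, where $\R\to\Map(\Delta^{q},\R)$ is induced by $\Delta^{q}\to\Delta^{0}$.

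The degeneracy $[q]\to[0]$ induces for each $q$ a monomorphism $\iota_{q}\from X=(\Sh)_{\bullet,0}\hookrightarrow(\Sh)_{\bullet,q}$, sending a simplex $x$ of $X$ to the pair $(h\circ x,\; x\circ\pr_{1})$. As $[q]\to[0]$ is the unique map in $\simp$ with that target, the $\iota_{q}$ are natural in $q$ and assemble into a map of bisimplicial sets $\iota\from cX\to\Sh$, where $cX$ denotes $X$ regarded as constant in the $q$-direction. On diagonals it becomes $\diag\iota\from X\to\diag\Sh$, $x\mapsto(h\circ x,\; x\circ\pr_{1})$, split by the retraction $\diag\Sh\to X$ that restricts $f$ along the diagonal $\Delta^{n}\to\Delta^{n}\times\Delta^{n}$; so $X$ is at least a retract of $\diag\Sh$.

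The main step is to show each $\iota_{q}$ is a weak equivalence, which I would do by exhibiting $X=(\Sh)_{\bullet,0}$ as a strong deformation retract of $(\Sh)_{\bullet,q}$. Evaluation at the bottom vertex $v_{0}\from\Delta^{0}\to\Delta^{q}$ gives a retraction $e_{0}\from(\Sh)_{\bullet,q}\to X$, $(c,f)\mapsto f\circ(\id\times v_{0})$, with $e_{0}\circ\iota_{q}=\id$. For the reverse composite, note that $0$ is the minimum of the poset $[q]$, so there is a natural transformation from the constant functor at $0$ to $\id_{[q]}$; its nerve is a simplicial homotopy $H\from\Delta^{q}\times\Delta^{1}\to\Delta^{q}$ from the constant map to $\id_{\Delta^{q}}$. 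Precomposition in the $\Delta^{q}$-variable turns $H$ into a homotopy $(\Sh)_{\bullet,q}\times\Delta^{1}\to(\Sh)_{\bullet,q}$ from $\iota_{q}\circ e_{0}$ to the identity; the key point is that it respects the condition ``$h\circ f$ factors through $\pr_{1}$'', since post-composing with $h$ only sees the $\Delta^{p}$-coordinate and is unaffected by the $\Delta^{q}$- and $\Delta^{1}$-coordinates --- in terms of the pullback above, precomposition by $H$ acts only on the $\Map(\Delta^{q},X)$-factor and trivially on the $\R$-factor. Hence each $\iota_{q}$ is a homotopy equivalence.

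Finally I would invoke the realization lemma: a map of bisimplicial sets that is a levelwise weak equivalence in one simplicial direction induces a weak equivalence on diagonals (equivalently, $\diag$ models $\hocolim$ over $\simp^{\op}$ and so preserves levelwise weak equivalences). Applied to $\iota\from cX\to\Sh$, whose $q$-levels are the $\iota_{q}$, this yields that $\diag\iota\from X\to\diag\Sh$ is a weak equivalence, so $\diag\Sh\simeq X$. I expect the third step to be the real obstacle: one must first reconcile the paper's construction of the higher section spaces with the ``maps over $\R$'' description, and then check carefully --- essentially a bookkeeping computation with the posets $[p]\times[q]$ and $[q]$ --- that the contracting homotopy is compatible with the height constraint and with all the simplicial operators. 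One subtlety is that the vertex $0$ is forced here, since the required natural transformation $\mathrm{const}_{k}\Rightarrow\id_{[q]}$ exists only when $k$ is the minimum, even though a posteriori every vertex evaluation $(\Sh)_{\bullet,q}\to X$ turns out to be an equivalence by two-out-of-three.
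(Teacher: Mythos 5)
Your argument is correct, and it is genuinely different from the one in the paper. You prove the statement levelwise: identifying the fixed-$q$ slice $(\Sh)_{\bullet,q}$ with the pullback $\map(\Delta^q,X)\times_{\map(\Delta^q,\R)}\R$ (which is indeed the transpose slicing of Definition~\ref{definition:ksections}), contracting it onto $X=(\Sh)_{\bullet,0}$ by crushing the $\Delta^q$--coordinate to the initial vertex via the nerve of $\mathrm{const}_0\Rightarrow\id_{[q]}$ (the height constraint is preserved exactly for the reason you give: post-composition with $h$ only sees the $\Delta^p$--factor, so the label $\bar a$ is untouched, and the homotopy commutes with the horizontal operators because $H$ acts only in the $\Delta^q$--slot), and then invoking that $\diag$ sends levelwise weak equivalences of bisimplicial sets to weak equivalences. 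The paper instead works directly on $\diag\Sh$: it uses the same two comparison maps ($\operatorname{pr}_0^\ast$ and $(\id,\id)^\ast$, which coincide with your $\diag\iota$ and the diagonal restriction) but constructs an explicit simplicial homotopy $\id_{\diag\Sh}\Rightarrow \operatorname{pr}_0^\ast\circ(\id,\id)^\ast$ by hand, via the families $\phi_{n,s},\psi_{n,s}$ of Lemma~\ref{deltalemma} and the horn $\Lambda^2_2$ as interval. What each buys: your route is shorter and more conceptual, but it outsources the main homotopical content to the realization lemma (Bousfield--Kan/Goerss--Jarvis) and a priori yields a weak equivalence, i.e.\ a homotopy equivalence after realization, rather than an explicit simplicial homotopy inverse; the paper's route is self-contained and produces the homotopy explicitly on the diagonal, at the price of the combinatorial bookkeeping you correctly anticipated as the delicate step. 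Your side remarks (the splitting $(\id,\id)^\ast\circ\operatorname{pr}_0^\ast=\id$, and that only the minimal vertex admits the required natural transformation, other vertex evaluations being equivalences only a posteriori) are accurate.
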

The computational implications of this result come from the existence of a spectral sequence which computes the homology of the diagonal~$\diag \S_h$, from homological features of the section spaces~$(\S_h)_p$ (see e.g.~\cite{segal1968classifying, goerss2009simplicial}). We refer to the spectral sequence associated to~$\S_h$ as the~\emph{section spectral sequence}. From the first page of the section spectral sequence we extract for~$q=0,1,2,\dots$ chain complexes~$\mathcal{G}_q$ by dividing out degeneracies. We refer to~$\mathcal{G}_q$ as the~$q$th \emph{Reeb complex} of~$h$. One may think of it as an object that carries information about how generators of the~$q$'th homology~$\homology_q$ flow across height levels. Moreover, for a finite simplicial set~$X$, the Reeb complexes can be computed in finite time.
\begin{proposition}
\label{prop:spectralsequenceintro}
Let~$h\colon X\rightarrow \R$ be a height function and~$\mathcal{G}_q$ its~$q$th Reeb complex. The associated section spectral sequence has~$\homology_p \mathcal{G}_q$ appearing as the~$(p,q)$th entry on the second page~$\page^2_{p,q}\simeq \homology_p \mathcal{G}_q$ and converges to the homology of~$X$:
\[
\homology_p \mathcal{G}_q \Rightarrow \homology_{p+q} X.
\]
\end{proposition}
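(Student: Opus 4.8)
The plan is to invoke the standard homology spectral sequence of a bisimplicial set applied to $\S_h$, and then identify its pages. Recall (see \cite{segal1968classifying, goerss2009simplicial}) that for a bisimplicial set $Y_{\bullet,\bullet}$ there is a first-quadrant spectral sequence with first page $\page^1_{p,q} = \homology_q\bigl((Y_{p,\bullet})\bigr)$ — the homology of the simplicial set obtained by fixing the first index at $p$ — whose $d^1$ differential is the alternating sum of the face maps in the $p$--direction, and which converges to $\homology_{p+q}(\diag Y)$. Applying this to $Y_{p,\bullet} = (\S_h)_p$, Theorem~\ref{intro:mainresult} gives $\diag \S_h \simeq X$, so the abutment is $\homology_{p+q} X$ as claimed. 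Thus the only work is to recognize the $\page^2$ term.

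First I would spell out the $\page^1$ page: $\page^1_{p,q} = \homology_q\bigl((\S_h)_p\bigr)$, the $q$th homology of the space of $p$--sections, with horizontal differential $d^1 \from \page^1_{p,q}\to \page^1_{p-1,q}$ induced by the simplicial face maps of $\S_h$ in the $p$--direction. Then I would recall the definition of the Reeb complex $\mathcal{G}_q$: by construction it is obtained from the row $\page^1_{\bullet,q}$ by ``dividing out degeneracies'', i.e. $\mathcal{G}_q$ is the normalized (Moore) chain complex associated to the simplicial abelian group $p \mapsto \homology_q\bigl((\S_h)_p\bigr)$, with differential the alternating sum of the $d_i$. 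The classical Dold–Kan / normalization theorem says that passing from a simplicial abelian group to its normalized complex does not change homology, and that the normalized complex is a direct summand of the unnormalized one with the degenerate part acyclic. Hence $\homology_p(\mathcal{G}_q)$ agrees with the homology of the row $(\page^1_{\bullet,q}, d^1)$, which by definition of the spectral sequence is exactly $\page^2_{p,q}$. This yields the isomorphism $\page^2_{p,q}\cong \homology_p \mathcal{G}_q$.

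Combining the two points, the section spectral sequence reads $\page^2_{p,q}\cong \homology_p\mathcal{G}_q \Rightarrow \homology_{p+q}(\diag\S_h)\cong\homology_{p+q}X$, which is the assertion. For the finiteness remark I would note that when $X$ is a finite simplicial set, each section space $(\S_h)_p$ is a finite simplicial set and is nonempty for only finitely many $p$ (the height values occurring in $X$ form a finite set, bounding the relevant range of $p$), so each $\homology_q\bigl((\S_h)_p\bigr)$ is finitely generated and computable, and hence so is $\mathcal{G}_q$ together with its homology.

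I do not expect a serious obstacle here: the statement is essentially a bookkeeping identification of the $\page^2$ term of a well-known spectral sequence with a normalized chain complex. The one point requiring care is making sure the ``dividing out degeneracies'' in the definition of $\mathcal{G}_q$ is interpreted so that the simplicial structure in the $p$--direction on $\page^1_{\bullet,q}$ is genuinely that of a simplicial abelian group — this relies on the section spaces $(\S_h)_p$ being simplicial in $p$ (as asserted in the construction of $\S_h$) and on homology being a functor, so that $p\mapsto \homology_q((\S_h)_p)$ inherits a simplicial abelian group structure. Once that is in place, Dold–Kan does the rest.
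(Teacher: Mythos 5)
Your proposal is correct and follows essentially the same route as the paper: the bisimplicial (Dold--Puppe/total complex) spectral sequence converging to $\homology_\ast \diag \S_h \cong \homology_\ast X$ via Theorem~\ref{intro:mainresult}, together with the observation that $\mathcal{G}_q$ is the row $\page^1_{-,q}$ with degeneracies divided out, which leaves homology unchanged by the standard normalization result. The extra finiteness remark is fine but not part of the paper's proof of the proposition.
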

In  \cite{Vaupel_TDA_2022} we define Reeb complexes~$\reeb^A_q$, that capture how generators of homology flow along sections of a \textit{continuous} height function between heights~$A \subset \mathbb{R}$. Based on the theory in \cite{trygsland2021} we can often replace this object with a \textit{truncated Reeb complex}~$\truncreeb_q$ that only incorporates sections between \textit{adjacent critical levels}. The reason is that continuous sections (and their homotopies) always factorise into smaller sections between intermediate height levels. As a consequence the continuous section spectral sequence always converges on the second page. This is different from the simplicial setting of the paper at hand, where the presence of higher section spaces can lead to non-trivial higher differentials in the spectral sequence.
 We say that~$X$ is \emph{subdivided according to}~$h$ if every~$1$--section traverses only successive height levels. In this case, the section spectral sequence collapses at the second page. We are then in a position to directly compare the topological and the simplicial Reeb complexes. 
 \begin{proposition}\label{proposition:comparereeb}
 Let~$h \from X \to \R$ be a simplicial height function, such that~$X$ is subdivided according to~$h$. Then~$h$ associates to a piecewise linear function~$f \from |X| \to \mathbb{R}$ and there is an induced isomorphism of chain complexes between~$\reeb_h$ and the truncated continuous Reeb complex~$\truncreeb_q$.
 \end{proposition}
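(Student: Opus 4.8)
The plan is to extract $f$ from $h$, to write down both Reeb complexes explicitly in the subdivided case, and then to match them term by term via a comparison of simplicial and continuous section spaces. Let $\rho\from|\R|\to\mathbb{R}$ be the canonical affine collapse of $|\R|=|\nerve(\mathbb{R},\leq)|$, sending a point of the geometric simplex carried by a chain $a_0\leq\cdots\leq a_p$, with barycentric coordinates $(t_0,\dots,t_p)$, to $\sum_i t_i a_i$; this is well defined and compatible with faces and degeneracies, so $f:=\rho\circ|h|\from|X|\to\mathbb{R}$ is defined, and it is affine on each simplex of $|X|$, hence piecewise linear for the triangulation given by $X$. Let $A\subseteq\mathbb{R}$ be the set of heights occurring as $0$--simplices of $h(X)$. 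The subdivision hypothesis says that whenever a $1$--simplex of $X$ has distinct endpoint heights $a<b$ then $(a,b)\cap A=\varnothing$; applied to the edge $v_0v_p$ of an arbitrary simplex of $X$ it forces every simplex of $X$ to have vertices at at most two heights, which must be successive in $A$. Hence for $c\in A$ the fibre $f^{-1}(c)$ equals the subcomplex $|h^{-1}(c)|$ (no simplex of $X$ meets the level $c$ in its interior), the critical values of $f$ are exactly the elements of $A$, and --- as recalled just before the statement --- the section spectral sequence degenerates at its second page.

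I would then make both Reeb complexes explicit. By construction $\mathcal{G}_q$ has in homological degree $p$ the group $\homology_q\big((\S_h)_p\big)$ with the images of the horizontal degeneracies divided out, and differential the alternating sum of the horizontal faces. Since a $p$--section is a $p$--simplex of $X$ tagged with its tuple of vertex heights, $(\S_h)_p=\coprod_{a_0\leq\cdots\leq a_p}(\S_h)_p^{(a_0,\dots,a_p)}$; by the first paragraph the only summands surviving the normalisation are indexed by chains with $p\leq1$, namely the fibres $h^{-1}(c)$ for $c\in A$ in degree $0$, and the spaces $(\S_h)_1^{(c,c')}$ of $1$--sections from $c$ to $c'$ and their homotopies, for successive $c<c'$ in $A$, in degree $1$; the differential in degree $1$ is $(d_0)_\ast-(d_1)_\ast$, evaluation at the top minus evaluation at the bottom. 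On the continuous side, by \cite{Vaupel_TDA_2022,trygsland2021} the truncated Reeb complex $\truncreeb_q$ has the same shape: $\bigoplus_{c\in A}\homology_q\big(f^{-1}(c)\big)$ in degree $0$ and $\bigoplus_{c<c'\ \mathrm{successive}}\homology_q\big(\mathrm{Sect}_f(c,c')\big)$ in degree $1$, where $\mathrm{Sect}_f(c,c')$ is the space of sections of $f$ over $[c,c']$, with differential $(\ev_{c'})_\ast-(\ev_c)_\ast$; the restriction to successive levels loses nothing here precisely because continuous sections factor through intermediate critical levels.

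To match the two complexes: in degree $0$ this is the identity $f^{-1}(c)=|h^{-1}(c)|$ noted above. In degree $1$ I would use the comparison map sending a $1$--section $v_0v_1$ over $(c,c')$ to the affine section $[c,c']\to|X|$ with image $|v_0v_1|$, and a $q$--homotopy of $1$--sections to a homotopy of sections; this defines $|(\S_h)_1^{(c,c')}|\to\mathrm{Sect}_f(c,c')$, and the claim is that it is a homotopy equivalence compatible with evaluation at $c$ and at $c'$. Granting this, applying $\homology_q$ and dividing out degeneracies turns these degreewise homotopy equivalences into isomorphisms commuting with the differentials, giving the asserted isomorphism of chain complexes: one isomorphism $\mathcal{G}_q\xrightarrow{\ \cong\ }\truncreeb_q$ for each $q\geq0$, i.e.\ $\reeb_h\cong\truncreeb_q$.

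The main obstacle is the degree--$1$ homotopy equivalence $|(\S_h)_1^{(c,c')}|\simeq\mathrm{Sect}_f(c,c')$. This is a section--space computation for the simplexwise affine function $f$ on the two--level slab between $c$ and $c'$, and it is the combinatorial counterpart of the section--space results of \cite{trygsland2021}: one must show, by straightening an arbitrary continuous section of $f$ over $[c,c']$ through sections to a simplexwise affine one (a simplicial--approximation argument) and then identifying homotopy fibres, that the comparison map is an equivalence over each point of $f^{-1}(c)$. Without the subdivision hypothesis such a section can thread through a chain of several simplices of $X$ instead of being recorded by a single $1$--section, which is exactly the origin of the higher differentials in the general section spectral sequence. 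Two auxiliary points also need attention: that ``dividing out degeneracies'' kills precisely the $p$--section spaces with a repeated height, so that $\mathcal{G}_q$ is genuinely supported in degrees $0$ and $1$ --- a Segal--type retraction in the horizontal direction, consistent with the $\page^2$--collapse recalled above --- and that the normalisations used on the two sides agree.
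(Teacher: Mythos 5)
Your reduction is set up correctly (the construction of $f$ via the affine collapse of $|\R|$, the observation that subdivision forces every simplex to span at most two successive height levels so that $f^{-1}(c)=|h^{-1}(c)|$ and both complexes are supported in degrees $0,1$, and the degree-$0$ identification), but there is a genuine gap: the whole content of the proposition is the statement you label ``the main obstacle'' and then grant, namely that the comparison map $|\S_h[c,c']|\rightarrow \mathrm{Sect}_f[c,c']$ is an equivalence compatible with the two endpoint evaluations. Once that is known, matching the two complexes degreewise is routine --- this is exactly how the paper deduces the proposition, as Corollary~\ref{corollary:reebcomplexes} of Proposition~\ref{proposition:comparisonWithTopological} --- so your proposal reduces the statement to the right comparison but does not prove it. The sketched route (``straighten an arbitrary continuous section over $[c,c']$ to a simplexwise affine one by a simplicial-approximation argument and identify homotopy fibres'') is not carried out and is far from automatic: $\mathrm{Sect}_f[c,c']$ consists of arbitrary continuous sections of $f$ in the slab, its homotopy type is precisely the hard input of~\cite{trygsland2021}, and no deformation onto affine sections is constructed. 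You also claim a homotopy equivalence, which is stronger than what is needed (and than what the paper establishes, namely a homology equivalence).

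For contrast, the paper proves the needed comparison (Proposition~\ref{proposition:comparisonWithTopological}) by a global, indirect argument rather than a fibrewise straightening: first reduce to two height levels with contractible fibres by coning off the fibres, using Lemma~\ref{lemma:contractfibers} to see that this replacement does not change $\S_h[0,1]$; then extend $\Phi_h$ to a map of simplicial spaces $\mathrm{T}\S_h\rightarrow \nerve\,\mathrm{Sect}_f$ and use Theorem~\ref{intro:mainresult} together with Theorem~1.1 of~\cite{trygsland2021} to conclude that its realization is a homotopy equivalence; finally, the Dold--Puppe theorem and the collapse at $\page^2$ of both section spectral sequences (valid because $X$ is subdivided), together with contractibility of the fibres and a five-lemma argument in the case $q=0$, transfer this global equivalence to an isomorphism $\homology_q\Phi_h$ on each section space between successive levels. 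To complete your proof you must either supply the missing deformation of continuous sections onto simplexwise affine ones (with the compatibility with evaluations), or import an argument of this global type; as written, the proposal assumes the theorem's core step.
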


This result shows in particular that simplicial section spaces can model the homology theory of topological sections of piecewise linear functions on CW-complexes.  
 
Because the topological Reeb complexes always compute the homology of the base space it also follows that in the case where~$X$ is subdivided according to~$h$ the  Reeb
 complexes~$\mathcal{G}_q$ recover the homology of~$X$ directly. For field coefficients we then have:
\begin{equation*}
    \homology_{n}X=\bigoplus_{p+q=n}\homology_p\mathcal{G}_q.
\end{equation*}

\textbf{Outline.} In 
Section~\ref{section:SectionComplex} we start out by briefly reviewing some basic concepts from the theory of simplicial sets. This sets the stage to then  define section spaces and describe how they assemble into the \textit{section complex}. We proof Theorem~\ref{intro:mainresult} in Section~\ref{section:mainresult}. Subsequently Reeb complexes are introduced in  Section~\ref{section:ReebGraphs}. This is supplemented by a discussion of the more complicated section spectral sequence from which the Reeb complexes can be extracted. In section \ref{section:ss} we briefly provide some general background on spectral sequences of double complexes. Throughout section \ref{section:ReebComplexes} numerous examples are computed to illustrate the theory. Finally Section~\ref{section:topologicalspaces!?} relates our simplicial theory back to the theory of topological sections in~\cite{trygsland2021} by providing a proof of Proposition \ref{proposition:comparereeb}. The paper closes with some final remarks on the usefulness of simplicial sets in computational topology.  

\textbf{Computer code.} A Python implementation for computing section complexes, as well as Reeb complexes, is found at \url{https://github.com/paultrygs/Section-Complex/}.

\textbf{Notation.} Categories of familiar objects are put inside parenthesis, e.g.~$\sSet$. The hom-set of maps~$X\rightarrow Y$ is denoted~$\Map(X,Y)$, while~$\map(X,Y)$ refers to the simplicial set of maps. We will in general consider chain complexes over coefficients in some field~$k$.

\newpage

\section{The section complex}
\label{section:SectionComplex}
We introduce the central object of our studies: the \textit{section complex}~$\S_h$ associated to a simplicial height function~$h \from X \to \R$. We assemble it from the \textit{spaces of sections}~$(\S_h)_p$. As such it contains the topological information about sections between height levels of~$h$ but also the combinatorial information about how these fit together across different height levels. Theorem \ref{intro:mainresult}, that we proof at the end of this section, shows that we can combine these two pieces of information to recover the homology of~$X$.  \\

Let us start out with a quick review of some basics about \textit{simplicial sets}. This is not intended to be very pedagogical but rather serves the purpose to quickly introduce some necessary notation. For a comprehensive introduction to the theory of simplicial sets see for example \cite{goerss2009simplicial} and for a more elementary treatment \cite{friedman2021elementary}.   

\subsection{Background on simplicial sets}
\label{section:SimplicialSets}
A~\emph{simplicial set}~$X$ is a sequence~$X_n$ of sets, ranging over~$n=0,1,2,\dots$, together with face maps~$d_i\colon X_n\rightarrow X_{n-1}$ and degeneracy maps~$s_j\colon X_n\rightarrow X_{n+1}$ satisfying certain relations~\cite[p.4]{goerss2009simplicial}. An element~$x$ in~$X_n$ is interpreted as an~$n$--simplex whose~$i$th face is~$d_ix$, whereas~$s_jx$ incorporates ways to consider~$x$ as an~$(n+1)$--simplex. In contrast to simplicial complexes, this for example implies that an~$(n+1)$--simplex~$y$ can have an~$(n-1)$--simplex~$x$ as its face;~$d_i y=s_j x$. Moreover, two distinct~$n$--simplices~$x$ and~$y$ can have equal faces~$d_ix=d_i y$ for all~$i$. Equivalently, the data of a simplicial set~$X$ can be organized into a functor~$X\colon \Delta^{\op}\rightarrow \Set$, where~$\Delta$ is the~\emph{simplex category}.
\begin{example}
We construct a circle from~$0$--simplices~$v_0$ and~$v_1$ and~$1$--simplices~$e_0$ and~$e_1$, not counting degeneracies, by declaring~$d_0e_i=v_1$ and~$d_1e_i=v_0$, for $i=0,1$. A sphere can be obtained from a single~$0$--simplex~$v$ and~$2$--simplex~$f$. In this case, all faces of~$f$ must be equal to~$s_0v$, a degenerate~$0$--simplex. This means that the boundary of~$f$ is equal to the point~$v$.

\begin{center}
    \begin{tikzpicture}
    
      \filldraw (0,0) circle (1.5pt);
      \filldraw (2,0) circle (1.5pt);
      \draw[->] (0,0) arc [start angle=180,end angle=350, x radius=1cm, y radius=0.5cm];
      \draw[<-] (2,0.1) arc [start angle=10,end angle=180, x radius=1cm, y radius=0.5cm];
      \node[left] at (0,0) {$v_0$};
      \node[right] at (2,0) {$v_1$};
      
      \node[above] at (1,0.5) {$e_0$};
      \node[below] at (1,-0.5) {$e_1$};

      \filldraw (5,1) circle (1.5pt);
      \draw[] (5,1) arc [start angle=90,end angle=450, x radius=1cm, y radius=1cm];
      \draw[dotted] (6,0) arc [start angle=0,end angle=360, x radius=1cm, y radius=0.25cm];
      \node[above] at (5,1) {$v$};
      \node[right] at (6,0) {$f$};
      
    \end{tikzpicture}
\end{center}
\end{example}
A~\emph{simplicial map}~$f\colon X\rightarrow Y$ is a series of maps~$f_n\colon X_n\rightarrow Y_n$ which commutes with face and degeneracy maps. 
Pictures as above are produced by labeling each~$n$--simplex in~$X$ with the topological~$n$--simplex~$\Delta^n_t$ and identifying appropriate simplices via the~\emph{geometric realization}~$|X|=(\coprod\limits_n X_n \times \Delta^n_t )/ \sim$. The quotient glues simplices along faces and collapses degenerate simplices. Note that the realization defines a functor
\[
|\cdot| \from \sSet \to \Top
\]

Any small category~$\C$ defines a simplicial set~$\nerve \C$, called the~\emph{nerve} of~$\C$. The set of~$0$--simplices,~$\nerve\C_0$, consists of objects in~$\C$, and~$\mathrm{N}\mathcal{C}_n$ consists of tuples~$(m_1,\dots,m_n)$, of composable morphisms within~$\mathcal{C}$. The~$i$th face of~$(m_1,\dots,m_n)$ is determined by composition~$(m_1,\dots,m_{i+1}\circ m_i,\dots,m_n)$ for~$i\neq 0,n$, whereas~$d_0$ and~$d_n$ drops~$m_0$ and~$m_n$, respectively. We depict a~$2$--simplex~$(f,g)$:
\begin{center}
    \begin{tikzcd}
    & B \arrow[rd,"g"] &  \\
    A \arrow[ru,"f"] \arrow[rr,"g\circ f"] & &  C.
\end{tikzcd}
\end{center} 
\begin{example}
\label{example:[n]}
Let~$[n]$ be the category generated by the directed graph~$0\rightarrow 1\rightarrow\cdots\rightarrow n$. Applying the nerve yields the \emph{standard simplicial}~$n$--simplex~$\Delta^n=\nerve[n]$. It consists of a unique~$n$--simplex coming from the tuple~$(0\rightarrow 1,1\rightarrow 2,\dots,n-1\rightarrow n)$ with~$n+1$ distinct faces. We recover the topological~$n$--simplex as~$|\Delta^n|$.
\begin{center}
    \begin{tikzpicture}
    \filldraw (0,0) circle (1.5pt);
    
    \filldraw (1,0) circle (1.5pt);
    \filldraw (2,0) circle (1.5pt);
    \draw (1,0) -- (2,0);
    
    \filldraw (3,-0.5) circle (1.5pt);
    \filldraw (4.5,-0.5) circle (1.5pt);
    \filldraw (3.75,0.5) circle (1.5pt);
    \draw[fill=gray!20] (3,-0.5) -- (4.5,-0.5) -- (3.75,0.5) -- cycle;
    \end{tikzpicture}
\end{center}
There are \emph{simplicial inclusions}~$\delta_i\colon \Delta^n \rightarrow \Delta^{n+1}$ which identify~$\Delta^n$ with the~$i$th face of~$\Delta^{n+1}$. Observe that~$\delta^i(q)$ equals~$q$ if~$q<i$ and~$q+1$ otherwise. Conversely, there are \emph{simplicial collapses}~$\sigma^j\colon \Delta^{n+1}\rightarrow \Delta^{n}$ for which~$\sigma^i(q)$ equals~$q$ if~$n\leq i$ and~$q-1$ otherwise. 
\end{example}
\begin{definition}
\label{def:realline}
Let~$(\mathbb{R},\leq)$ be the real line equipped with its usual ordering. We define the~\emph{simplicial real line}~$\R=\nerve (\mathbb{R},\leq)$.
\end{definition}
An~$n$--simplex in~$\R$ is uniquely determined by a non-decreasing sequence~$\bar{a}=(a_0,\dots,a_n)$ of real numbers. 

Two simplicial sets~$X$ and~$Y$ define a product~$X\times Y$ with~$n$--simplices~$X_n\times Y_n$, whose face and degeneracy maps are computed component-wise.
\begin{example}
\label{example:standard1timesstandard1}
Consider the product of two copies of the standard~$1$--simplex:~$\Delta^1\times \Delta^1$. Decomposing~$0\rightarrow 1$ in components~$(0\rightarrow 1,0\rightarrow 1)=(0\rightarrow 1,1\rightarrow 1)\circ (0\rightarrow 0, 0 \rightarrow 1)$ yields the top~$2$--simplex in its realization:
\begin{center}
    \begin{tikzpicture}[scale=1.5]
    \node[left] at (0,0) {$(0,0)$};
    \node[left] at (0,1) {$(0,1)$};
    \node[right] at (1,0) {$(1,0)$};
    \node[right] at (1,1) {$(1,1)$};
    \draw[fill=gray!20] (0,0) -- (1,1) -- (0,1) -- cycle;
    \draw[fill=gray!20] (0,0) -- (1,1) -- (1,0) -- cycle;
    \end{tikzpicture}
\end{center}
The bottom one is obtained as~$(0\rightarrow 1,0\rightarrow 1)=(1\rightarrow 1,0\rightarrow 1)\circ (0\rightarrow 1, 0 \rightarrow 0)$.
\end{example}
A \emph{simplicial homotopy} is a simplicial map~$H\colon X\times I\rightarrow Y$ such that~$I$ realizes to the standard unit interval. Note that a simplicial homotopy realizes to an ordinary homotopy in topological spaces~\cite{segal1968classifying}.

The~\emph{simplicial mapping space}~$\map(X,Y)$ contain simplicial maps~$f\colon X\rightarrow Y$ as its~$0$--simplices. An~$n$--simplex in $\map(X,Y)_n$ is a simplicial map~$X\times \Delta^n \rightarrow Y$. Since~$\Delta^1$ is a model of the interval, the~$1$--simplices are homotopies. Face and degeneracy maps,~$d_i$ and~$s_j$, are obtained by pre-composing with component-wise maps~$\id_X\times \delta^i$ and~$\id_X\times \sigma^j$, respectively (Example~\ref{example:[n]}). Applying~$d_i$ to~$f\colon X\times \Delta^n\rightarrow Y$ thus restricts~$\Delta^n$ to its~$i$th face, whereas~$s_j$ adds appropriate identities.

\begin{example}
\label{example:mappingspace}
A~$0$--simplex in~$\map(\Delta^1,X)$ is a simplicial map~$e\colon \Delta^1\rightarrow X$, uniquely determined by a~$1$--simplex~$e$ in~$X$. Homotopies~$H\colon \Delta^1\times \Delta^1\rightarrow X$, or~$1$--simplices, are determined by squares in~$X$ connecting two~$1$--simplices~$e_0$ and~$e_1$.
\begin{center}
    \begin{tikzpicture}[scale=1.5]
    \draw[fill=gray!20] (0,0) -- (1,1) -- (0,1) -- cycle;
    \draw[fill=gray!20] (0,0) -- (1,1) -- (1,0) -- cycle;
    \node[above] at (0.5,1) {$e_1$};
    \node[below] at (0.5,0) {$e_0$};
    
    \draw[->] (-0.5,0.5) -- (-1,0.5);
    \draw[->] (1.5,0.5) -- (2, 0.5);
    \node[above] at (-0.75,0.5) {$d_0$};
    \node[above] at (1.75,0.5) {$d_1$};
    
    \draw (-2.5,0.5) -- (-1.5,0.5);
    \draw (2.5,0.5) -- (3.5,0.5);
    \node[below] at (3,0.5) {$e_0$};
    \node[above] at (-2,0.5) {$e_1$};
    \end{tikzpicture}
\end{center}
\end{example}

\subsection{Sections of height functions}\label{section:sectionsofheightfunction}
We are now going to use the theory of simplicial sets from above to model the behaviour of sections of a height function. Before we do so lets quickly remind ourselves of the \textit{continuous} theory of sections from \cite{trygsland2021}.

Let~$f \from T \to \mathbb{R}$ be a continuous function on a topological space.
In ~\cite{trygsland2021}, a~\emph{section} of~$f$ is defined as a map~$\rho \colon [a,b]\rightarrow T$, such that the composition~$f\circ \rho$ is the inclusion~$[a,b]\hookrightarrow \mathbb{R}$. Denote by~$\map([a,b],T)$ the topological space of maps~$[a,b]\rightarrow T$, with the compact-open topology, and define~$\mathrm{Sect}_f[a,b]$ as the subspace whose points are the sections of~$f$. These are then arranged in the space of all sections~$\mathrm{Sect}_f=\coprod \mathrm{Sect}_f[a,b]$, ranging over all real numbers~$a\leq b$. Note that two sections in~$\mathrm{Sect}_f[a,b]$ and~$\mathrm{Sect}_f[b,c]$, with compatible ending and starting points, can be concatenated to a section in~$\mathrm{Sect}_f[a,c]$. This makes it possible to define the \textit{section category} of~$f$, a category internal to topological spaces with~$\mathrm{Sect}_f$ as its space of morphisms. In~\cite{trygsland2021} it is then shown that under fairly mild assumptions, the classifying space of this category is homotopy equivalent to~$T$. These assumptions are for example met by piecewise linear functions. \\
We will now describe how to obtain such piecewise linear functions from height functions on  simplicial sets.
To that end, recall the definition of the simplicial real line~$\R$ (Definition~\ref{def:realline}).
\begin{definition}
\label{definition:heightfunction}
Let~$X$ be a simplicial set. A height function~$h$ on~$X$ is defined as a simplicial map~$h \from X \to \R$.
\end{definition}
We can equivalently characterize a height function,~$h$, as a map between sets~$h\colon X_0\rightarrow \mathbb{R}$. Indeed,~$h$ associates to each~$0$--simplex~$v$ in~$X_0$ a height~$h(v)$ in~$\mathbb{R}$. Conversely, assigning to every~$v $ in~$ X_0$ a height $h(v)$ such that the orientation of the~$1$--simplices in~$X$ is respected, defines a unique height function $h \from X \to \R$. We call the image $h(X_0)$ the \textit{height levels} of~$h$.   

We observe from the previous section that a point in the realization~$|\R|$ is a class~$[\bar{a},\bar{t}]$ where~$\bar{a}=(a_0,\dots,a_n)$ is a non-decreasing sequence of real numbers and~$\bar{t}=(t_0,\dots,t_n)$ a point in the topological~$n$--simplex. The dot product~$\bar{a}\bar{t}$ defines a continuous function~$c\colon |\R|\rightarrow \mathbb{R}$ from the realization of~$\R$ to the real line. Any height function~$h\colon X\rightarrow \R$ thus associates to a piecewise linear function~$f\colon |X|\rightarrow \mathbb{R}$ by composing~$|h|$ and~$c$. It is shown in~\cite{trygsland2021} that no homotopical information is lost if we only consider those sections that start and end at the height levels of~$h$. 
 
We now ask the following question: is it possible to construct a \textit{simplicial version} of the section category directly from the simplicial height function~$h \from X \to \R$ rather than from the associated piecewise linear function~$f$? This would render all the information involved combinatorial and in particular accessible to computational topology. 

There is a natural choice for replacing the topological space of sections between two heights~$a_0$ and~$a_1$ with a simplicial set, $\Sh[a_0,a_1]$. Namely, the subspace of the mapping space,~$\map(\Delta^1,X)$, carved out by the pullback:
\begin{equation*}
		\begin{tikzcd}
		 \Sh[a_0,a_1] \arrow[r] \arrow[d] & \map(\Delta^1,X) \arrow[d,"{\map(\Delta^1 ,h)}"] \\
	  \Delta^0 \arrow[r, "{(a_0,a_1)}"] & \map(\Delta^1,\R),
		\end{tikzcd}
	\end{equation*}
where we interpret the~$1$--simplex~$(a_0,a_1)$ in~$\R$ as the simplicial map~$\Delta^1 \to \R$, with~$0\mapsto a_0$ and~$1\mapsto a_1$. We may then define~$(\Sh)_1$, \textit{the space of~$1$--sections} of~$h$ as the disjoint union 
\begin{equation*}
    (\Sh)_1 = \coprod_{a_0\leq a_1} \Sh[a_0,a_1].
\end{equation*}

\begin{example}
    \label{example:standard1}
	We take as our simplicial set the standard~$2$--simplex~$\Delta^2$ and define a height function by the labels of the figure:
	\begin{center}
		\begin{tikzpicture}[scale= 0.7]
		
			\node[left] at (0,0) {$0$};
			\node[above] at (2,2) {$1$};
			\node[below] at (2,0) {$1$};
			
			\draw[fill=gray!20] (0,0) -- (2,2) -- (2,0) -- cycle;
			
			\node[below] at (1,0) {$e_2$};
			\node[above] at (0.9,1) {$e_1$};
			\node[right] at (1,0.5) {$s$};
		
		\end{tikzpicture}
	\end{center}
	The two horizontal~$1$--simplices~$e_1$ and~$e_2$ in~$\Delta^2$ are~$0$--simplices in the section space~$\S_h[0,1]$. The~$2$--simplex~$s$ corresponds to a~$1$--simplex connecting~$e_1$ and~$e_2$;~$d_0s=e_1$ and~$d_1s=e_2$.   
\end{example}
However, the following example illustrates why we cannot proceed as in the construction of the topological section category.  

    
    
\begin{example}
\label{example:standard2} We define a height function on the standard~$2$--simplex~$\Delta^2$ as follows:
\begin{center}
\begin{tikzpicture}[scale= 0.7]

    \node[left] at (0,0) {$0$};
    \node[above] at (2,2) {$1$};
    \node[right] at (4,0) {$2$};
    
    \draw[fill=gray!20] (0,0) -- (2,2) -- (4,0) -- cycle;
    
    \node[below] at (2,0) {$e_1$};
    \node[left] at (1,1.2) {$e_2$};
    \node[right] at (3,1.2) {$e_0$};
    \node[] at (2,0.8) {$s$};
    
\end{tikzpicture}
\end{center}
 There is a unique~$1$--simplex between each distinct pair of heights. This means that the~$1$--section spaces are:~$\Sh[0,1]=\{e_2\}$,~$\Sh[1,2]=\{e_0\}$ and~$\Sh[0,2]=\{e_1\}$. Hence the space of $1$--sections cannot be utilized to recover the~$2$--simplex~$s$ in~$\Delta^2$ connecting~$e_0$,~$e_1$ and~$e_2$. Contrast this with the corresponding topological situation. In that case, the sections~$|e_2|$ and~$|e_0|$ may be concatenated into~$|e_2|\ast |e_0|$, a section from height~$0$ to height~$2$ running through~$0\rightarrow 1\rightarrow 2$. We would also find a continuous path from~$|e_2|\ast |e_0|$ to~$|e_1|$ moving along the bottom of the triangle, given by continuous deformation through the realization of the~$2$--simplex. In particular, this process encodes the topological information provided by~$|s|$. 
\end{example} 

To recover higher simplices across more than two height levels, we introduce~\emph{higher sections}.

\subsection{Higher sections and the section complex}

\begin{definition}
\label{definition:ksections}
	Given a height function~$h\from X\to \R$ on a simplicial set~$X$, we may construct for every~$p$--simplex~$\bar{a} \from \Delta^p \to \R$, a simplicial set~$\Sh[\bar{a}]$ as the pullback  
 \begin{equation*}
      \begin{tikzcd}
       \Sh[\bar{a}] \arrow[r] \arrow[d] & \map(\Delta^p,X) \arrow[d,"{\map(\Delta^p,h)}"] \\
    \Delta^0 \arrow[r, "\bar{a}"'] & \map(\Delta^p,R).
      \end{tikzcd}
  \end{equation*}
Taking the disjoint union over all~$p$--simplices in~$\R$, we obtain 
\begin{equation*}  
      (\Sh)_p := \bigsqcup_{\bar{a} \in \R_p} \Sh[\bar{a}],
\end{equation*}
which we refer to as the \textit{space of~$p$--sections of~$h$}. 
\end{definition}
By definition, the space of~$p$--sections is a simplicial set. This is guaranteed by the fact that pullbacks and coproducts always exist in the category of simplicial sets. The~$0$--simplices are the \textit{$p$--sections of~$h$}, i.e. the ~$p$--simplices in~$X$ spanning~$p$ height levels. The ~$q$--simplices are then the~$(p,q)$--sections in~$(\S_h)_p$ corresponding to a simplicial map~$\rho \from \Delta^p \times \Delta^q \to X$ such that there is some~$p$--simplex~$\bar{a}=(a_0,\dots, a_p)$ in~$ R_p$ for which 
\begin{equation*}\label{explicitsections}
    \begin{tikzcd}
        \Delta^p \times \Delta^q \arrow[d,"\rho"] \arrow[r,"\pr_0"] & \Delta^p \arrow[r,"\bar{a}"] & \R \\
        X \arrow[urr,"h"']
    \end{tikzcd}
\end{equation*}
commutes. For instance, the 2-simplex, $s$, of Example \ref{example:standard1} is  a $(1,1)$--section in $(S_h)_1$. In Example~\ref{example:standard2}, on the other hand,~$s$ is a~$(2,0)$--section in~$(S_h)_2$.

In Section \ref{section:SimplicialSets}, we explained how the face and degeneracy maps in~$\map(\Delta^p,X)$ are obtained by pre-composing with~$\id \times  \delta^i$ and~$\id \times \sigma^j$:
\begin{center}\label{explicitsections}
    \begin{tikzcd}
        \Delta^p \times \Delta^{q-1} \arrow[dd,bend right=70,"d^v_i(\rho)"'] \arrow[d,"\id \times  \delta^i"] \arrow[r,"\pr_0"] & \Delta^p \arrow[r,"\bar{a}"] & \R &
        \Delta^p \times \Delta^{q+1} \arrow[dd,bend right=70,"s^v_j(\rho)"'] \arrow[d,"\id \times  \sigma^j"] \arrow[r,"\pr_0"] & \Delta^p \arrow[r,"\bar{a}"] & \R 
        \\
        \Delta^p \times \Delta^{q} \arrow[d,"\rho"] & & & \Delta^p \times \Delta^{q} \arrow[d,"\rho"] & & \\
        X \arrow[uurr,"h"'] & & & X \arrow[uurr,"h"'] & & 
    \end{tikzcd}
\end{center}
These diagrams characterize the face and degeneracy maps
\[
d_i^v \from  (\Sh)_{p,q} \to (\Sh)_{p,q-1} \quad \text{and} \quad s_j^v \from  (\Sh)_{p,q} \to (\Sh)_{p,q+1}
\]
in the space of~$p$--sections. As the superscript~$v$ indicates, we call these the \textit{vertical face and degeneracy maps}. 

The above use of `vertical' hints to the fact that there is a second, \textit{horizontal}, simplicial structure. Indeed, we can pre-compose a~$(p,q)$--section with a simplicial inclusion or collapse applied to the first component in~$\Delta^p \times \Delta^q$ to obtain commutative diagrams   
\begin{equation*}
\begin{tikzcd}
    \Delta^{p-1} \times \Delta^q \arrow[dd,bend right=70,"d^h_i(\rho)"'] \arrow [d,"\delta^i \times \id"] \arrow[r,"\pr_0"] & \Delta^{p-1} \arrow[r,"\delta^i"] & \Delta^p \arrow[r,"\bar{a}"] & \R \\
    \Delta^{p} \times \Delta^q \arrow [d,"\rho"] & & & \\
    X \arrow[uurrr,"h"'] & & &
    \end{tikzcd}
\end{equation*}
and 
\begin{equation*}
\begin{tikzcd}
    \Delta^{p+1} \times \Delta^q \arrow[dd,bend right=70,"s^h_j(\rho)"'] \arrow [d,"\sigma^j \times \id"] \arrow[r,"\pr_0"] & \Delta^{p+1} \arrow[r,"\sigma^j"] & \Delta^p \arrow[r,"\bar{a}"] & \R \\
    \Delta^{p} \times \Delta^q \arrow [d,"\rho"] & & & \\
    X \arrow[uurrr,"h"'] & & &
    \end{tikzcd}
\end{equation*}
These characterize maps of sets \begin{equation*}
    d^h_i \from (\Sh)_{p,q} \to (\Sh)_{p-1,q} \quad \text{and} \quad 
    s^h_j \from (\Sh)_{p,q} \to (\Sh)_{p+1,q}
\end{equation*}
which we refer to as the \textit{horizontal face and degeneracy maps}. Alternatively, the horizontal face maps can be induced from the universal property of the pullback via:
\begin{center}
\begin{tikzcd}
       & & \map(\Delta^p,X) \arrow[dd] \arrow[dr,"{\map(\delta^i,X)}"] & \\ & & & \map(\Delta^{p-1},X) \arrow[dd] \\
    \Delta^0 \arrow[rr, "\bar{a}"] \arrow[dr,equal] & & \map(\Delta^p,\R) \arrow[dr,"{\map(\delta^i ,\R)}"] & \\
    & \Delta^0 \arrow[rr,"\bar{a}\circ \delta^i"] & & \map(\Delta^{p-1},\R), 
    \end{tikzcd}
\end{center}
and similarly for horizontal degeneracy maps. This shows that the horizontal face maps are in fact simplicial maps 
\begin{equation*}
    d^h_i \from (\Sh)_p \to (\Sh)_{p-1} \quad \text{and} \quad  s^h_j \from (\Sh)_p \to (\Sh)_{p+1}
\end{equation*}
going from~$p$--sections to~$(p-1)$ and~$(p+1)$--sections, respectively. The intuition is that~$d_i^h$ restricts a~$p$--section in~$\S_h[a_0,\dots,a_p]$ to a~$(p-1)$--section in~$\S_h[a_0,\dots,\hat{a}_i,\dots,a_p]$, whereas~$s_j^h$ adds a degenerate label~$\S_h[a_0,\dots,a_j,a_j,\dots,a_p]$. The set~$(\S_h)_{p,q}$ is therefore simplicial in both~$p$ and~$q$, defining a~\emph{bisimplicial set}. 
\begin{definition}\label{defintion:sectioncomplex}
	The \textit{section complex} of a height function~$h \from X \to \R$ is the bisimplicial set~$\Sh$ with~$(p,q)$--simplices given by~$(\Sh)_{p,q}$, i.e. the~$(p,q)$--sections~$\rho\colon \Delta^p\times \Delta^q\rightarrow X$. It has horizontal and vertical face and degeneracy maps as defined above. 
\end{definition}

\begin{remark}\label{remark:bisimplicialsets}
We didn't explain what a bisimplicial set in general is.
Similar to a simplicial set it is given by a sequence~$X_{p,q}$ of sets that now ranges over pairs of natural numbers. The first component of these indices is sometimes called the \textit{horizontal} direction and the second one is called the \textit{vertical} direction. Correspondingly there are also two types of face- and degeneracy-maps for bisimplicial sets. The horizontal ones:
\begin{equation*}
    d^h_i \from X_{p,q} \to X_{p-1,q} \quad \text{and} \quad 
    s^h_j \from X_{p,q} \to X_{p+1,q}
\end{equation*}
and the vertical ones:
\[
d_i^v \from  X_{p,q} \to X_{p,q-1} \quad \text{and} \quad s_j^v \from  X_{p,q} \to X_{p,q+1}.
\]
These are both required to satisfy the \textit{simplicial relations}. We can also define a bisimplicial set as a functor
\begin{equation*}
    \Delta^{\op} \times \Delta^{\op} \to \Set
\end{equation*}
where $\Delta$ denotes the simplex category. The reader familiar with adjunctions will agree that this information is then equivalently presented as a functor 
\begin{equation*}
    \Delta^{\op} \to \left( \Delta^{\op} \to \Set \right)
\end{equation*}
 Postcomposing with the realization functor exhibits the intimate relationship of bisimplicial sets and simplicial spaces. In particular we may turn the section complex~$\S_h$ from Definition \ref{definition:sectioncomplex} into a simplicial space~$\mathrm{T} \S_h$: 
 \begin{equation*}
 \begin{tikzcd}
     \Delta^{\op} \arrow[r,"\S_h"] \arrow[dr,"\mathrm{T} \S_h"'] & \sSet \arrow[d,"|\cdot|"] \\
     & \Top
     \end{tikzcd}
 \end{equation*}
\end{remark}

\begin{example}
Consider once more the standard~$2$--simplex with height function like in Example \ref{example:standard2}. The simplicial set~$(\Sh)_0$ is the disjoint union~$\Sh[0] \coprod \Sh[1] \coprod \Sh[2]$. All these components consist of a single point determined by the~$0$--simplices at the corresponding heights. If we don't count degeneracies, the simplicial set~$(\Sh)_1$ is the disjoint union~$\Sh[0,1] \coprod \Sh[1,2] \coprod \Sh[0,2]$. Again, all of the components are singletons corresponding to the~$1$--simplices~$e_2$,~$e_0$ and~$e_1$, respectively. Lastly,~$(\Sh)_2=\Sh[0,1,2]$, containing the~$2$--section corresponding to~$s$. In this example, the horizontal face maps of~$s$ corresponds to the ordinary face maps of the standard~$2$--simplex;~$d_0^h s=e_0$,~$d_1^h s=e_1$ and~$d_2^h s=e_2$. Notice how the higher section space~$(S_h)_2$ makes it possible to recover the topology of the~$2$--simplex. 
\end{example}
\begin{example}
Consider the product of two standard~$1$--simplices as in Example \ref{example:standard1timesstandard1}. 
\begin{center}
    \begin{tikzpicture}[scale=2]
    \node[left] at (0,0) {$(0,0)$};
    \node[left] at (0,1) {$(0,1)$};
    \node[right] at (1,0) {$(1,0)$};
    \node[right] at (1,1) {$(1,1)$};
    \draw[fill=gray!20] (0,0) -- (1,1) -- (0,1) -- cycle;
    \draw[fill=gray!20] (0,0) -- (1,1) -- (1,0) -- cycle;
    
    \node[above] at (0.5,1) {$\rho_1$};
    \node[below] at (0.5,0) {$\rho_0$};
    \node[left] at (0,0.5) {$e_0$};
    \node[right] at (1,0.5) {$e_1$};
    \end{tikzpicture}
\end{center}
We obtain a height function by projecting the labels of the vertices to their first component~$h\colon (i,j)\mapsto i$.
The space of~$0$--sections is~$(\Sh)_0 = \Sh[0] \coprod \Sh[1]=h^{-1}0\coprod h^{-1}1$, with two components corresponding to the two~$1$--simplices~$e_0$ and~$e_1$. The space of~$1$--simplices is~$(\Sh)_1=\Sh[0,1]$. Consider the~$(1,1)$--section defined in terms of the identity~$\id_{\Delta^1 \times \Delta^1} \colon \Delta^1\times\Delta^1\rightarrow\Delta^1\times \Delta^1$. It has two horizontal faces~$e_0$ and~$e_1$ and two vertical faces given by the two~$1$--sections~$\rho_0$ and~$\rho_1$. We interpret this as~$\id_{\Delta^1 \times \Delta^1}$ being a homotopy from~$\rho_0$ to~$\rho_1$. 
\end{example}

\subsection{Proof of Theorem~\ref{intro:mainresult}}
\label{section:mainresult}
We will now prove that the diagonal of the section complex~$\S_h$, associated to a height function $h:X\to \R$, is homotopy equivalent to $X$ (Theorem ~\ref{intro:mainresult}). This justifies to use the spectral sequence of~$\S_h$ for extracting homological features of~$X$, which will thoroughly  discussed in Section~\ref{section:ReebGraphs}. 

The \emph{diagonal} of~$\S_h$,~$(\diag\S_h)_n$, is defined to have~$(n,n)$--sections~$\rho\colon \Delta^n\times \Delta^n\rightarrow X$ as~$n$--simplices. Since horizontal and vertical face maps are independent, we can safely define~$d_i=d_i^h d_i^v$ which is equal to~$d_i^vd_i^h$. Similarly,~$s_j=s_j^h s_j^v$. 

Let us understand how to relate~$\diag\S_h$ and~$X$: the~$n$--simplices in~$\diag \S_h$ define a subset of~$\map(\Delta^n,X)_n=\Map(\Delta^n \times \Delta^n,X)$, while the~$n$--simplices of~$X$ are given by the set~$\Map(\Delta^n,X)$. There are maps~$(\mathrm{id}_{\Delta^n},\mathrm{id}_{\Delta^n})\colon \Delta^n\rightarrow \Delta^n\times \Delta^n$,~$i\mapsto (i,i)$ and, conversely, the projection onto the "section component" is defined:~$\pr_0\colon \Delta^n\times \Delta^n\rightarrow \Delta^n$,~$(i,j)\mapsto i$. Pre-composition defines maps
\begin{equation*}
(\id,\id)^* \from \diag \Sh \to X \quad \text{and} \quad
\operatorname{pr}_0^* \from X \to \diag\Sh
\end{equation*} 
which will be proven mutual homotopy inverses. The latter map is well-defined. Indeed, if~$\tau \from \Delta^n \to X$ is any~$n$--simplex in~$X$, then the composition
\begin{equation*}
    \begin{tikzcd}
        \Delta^n \times \Delta^n \arrow[r,"\pr_0"] & \Delta^n \arrow[r,"\tau"] & X \arrow[r,"h"] & R
    \end{tikzcd}
\end{equation*} 
is in~$\Sh(\bar{a})$, where~$\bar{a}$ is defined by the~$n$--simplex~$h \circ \tau$.
Furthermore, these maps are clearly simplicial and the composition~$(\id,\id)^* \circ \operatorname{pr}_0^*$ is the identity. The proof of the theorem is thus reduced to finding a simplicial homotopy 
\begin{align*}
H\colon \id_{\diag \Sh} \Rightarrow \operatorname{pr}_0^* \circ (\id,\id)^*.
\end{align*}
To do so we first introduce for every~$n\geq 0$, two families of simplicial maps 
\begin{equation}\label{family1}
    \{\phi_{n,s}\from \Delta^n \times \Delta^n \to \Delta^n \times \Delta^n\}_{0\leq s \leq n+1} 
\end{equation}
and 
\begin{equation}\label{family2}
    \{\psi_{n,s}\from \Delta^n \times \Delta^n \to \Delta^n \times \Delta^n\}_{0\leq s \leq n+1}.
\end{equation}
Pulling these maps back along sections in~$(\diag \Sh)_n$ will then provide us with the components of our homotopy. Note that the parameter~$s$ will be necessary to make these components fit together into a simplicial map.

We specify how the maps (\ref{family1}) and (\ref{family2}) act on~$0$--simplices:
\begin{align} \label{def1}
    \phi_{n,s} (i,j) = \begin{cases}
        (i,i) \quad \text{if } i>n-s \quad \text{and } j\leq i \\
        (i,j) \quad \text{else}
    \end{cases} 
\end{align}
and 
\begin{align}\label{def2}
    \psi_{n,s} (i,j) = \begin{cases}
        (i,i) \quad \text{if } j\leq i \\
        (i,i) \quad \text{if } i<s \quad \text{and } j\geq i \\
        (i,j) \quad \text{else}
\end{cases} 
\end{align}
Note that the so defined assignments preserve the preorder on~$0$--simplices in~$\Delta^n \times \Delta^n$. 
Because~$\Delta^n \times \Delta^n$ is the nerve of the category~$[n] \times [n]$, and the nerve functor preserves products, (\ref{def1}) and (\ref{def2}) uniquely determine the families (\ref{family1}) and (\ref{family2}), respectively.

The following figure depicts the maps~$\phi_{1,s}$ and~$\psi_{1,s}$ in terms of their image.   

\begin{center}
        \begin{tikzpicture}[scale=0.2]
        
        \begin{scope}[thick, every node/.style={sloped,allow upside down}]
          \draw (-4,-4)-- node {\midarrow} (-4,4);
          \draw (-4,-4)-- node {\midarrow} (4,-4);
          \draw (-4,4)-- node {\midarrow} (4,4);  
          \draw (4,-4)-- node {\midarrow} (4,4);
          \draw (-4,-4)-- node {\midarrow} (4,4);
        
          \node [below] at (-4,-4) {$(0,0)$};
          \node [above] at (-4,4) {$(0,1)$};
          \node [above] at (4,4) {$(1,1)$};
          \node [below] at (4,-4) {$(1,0)$};

          \node at (0,-8) {$\phi_{1,0}$};
    \end{scope}
    \end{tikzpicture}
    \begin{tikzpicture}[scale=0.2]
        
        \begin{scope}[thick, every node/.style={sloped,allow upside down}]
            \draw (-4,-4)-- node {\midarrow} (-4,4);
            \draw (-4,-4)-- node {\midarrow} (4,-4);
            \draw (-4,4)-- node {\midarrow}(4,4);  
            \draw [-,double] (4,-4)-- (4,4);
            \draw (-4,-4)-- node {\midarrow} (4,4);
          
            \node [below] at (-4,-4) {$(0,0)$};
            \node [above] at (-4,4) {$(0,1)$};
            \node [above] at (4,4) {$(1,1)$};
            \node [below] at (4,-4) {$(1,1)$};
            
            \node at (0,-8) {$\phi_{1,1}=\phi_{1,2}=\psi_{1,0}$};
        \end{scope}
    \end{tikzpicture}
    \begin{tikzpicture}[scale=0.2]
        
        \begin{scope}[thick, every node/.style={sloped,allow upside down}]
            \draw (-4,-4)-- node {\midarrow} (4,-4);
            \draw [-,double] (4,-4)-- (4,4);
            \draw [-,double] (-4,-4)-- (-4,4);  
            \draw (-4,4)-- node {\midarrow} (4,4);
            \draw (-4,-4)-- node {\midarrow} (4,4);
          
            \node [below] at (-4,-4) {$(0,0)$};
            \node [above] at (-4,4) {$(1,1)$};
            \node [above] at (4,4) {$(1,1)$};
            \node [below] at (4,-4) {$(0,0)$};

            \node at (0,-8) {$\psi_{1,1}=\psi_{1,2}$};

        \end{scope}
    \end{tikzpicture}
    \end{center}

In general, the map~$\phi_{n,0}$ is always the identity and~$\psi_{n,n}=\psi_{n,n+1}=(\id,\id)\circ \pr_0$. These maps will respectively correspond to the start and end of our final homotopy.

We proceed by looking at how the maps~$\phi_{2,s}$ act on the three diagonal faces of~$\Delta^2 \times \Delta^2$. 
\newpage
$\operatorname{Im}(\delta^0\times \delta^0(\Delta^1 \times \Delta^1))$: 

    \begin{center}
        \begin{tikzpicture}[scale=0.2]
        
        \begin{scope}[thick, every node/.style={sloped,allow upside down}]
          \draw (-4,-4)-- node {\midarrow} (-4,4);
          \draw (-4,-4)-- node {\midarrow} (4,-4);
          \draw (-4,4)-- node {\midarrow} (4,4);  
          \draw (4,-4)-- node {\midarrow} (4,4);
          \draw (-4,-4)-- node {\midarrow} (4,4);
        
          \node [below] at (-4,-4) {$(1,1)$};
          \node [above] at (-4,4) {$(1,2)$};
          \node [above] at (4,4) {$(2,2)$};
          \node [below] at (4,-4) {$(2,1)$};

          \node at (0,-8) {$\phi_{2,0}$};

    \end{scope}
    \end{tikzpicture}
    \begin{tikzpicture}[scale=0.2]
        
        \begin{scope}[thick, every node/.style={sloped,allow upside down}]
            \draw (-4,-4)-- node {\midarrow} (-4,4);
            \draw (-4,-4)-- node {\midarrow} (4,-4);
            \draw [-,double] (4,-4)-- (4,4);  
            \draw (-4,4)-- node {\midarrow} (4,4);
            \draw (-4,-4)-- node {\midarrow} (4,4);
          
            \node [below] at (-4,-4) {$(1,1)$};
            \node [above] at (-4,4) {$(1,2)$};
            \node [above] at (4,4) {$(2,2)$};
            \node [below] at (4,-4) {$(2,2)$};
            
            \node at (0,-8) {$\phi_{2,1}$};

        \end{scope}
    \end{tikzpicture}
    \begin{tikzpicture}[scale=0.2]
        
        \begin{scope}[thick, every node/.style={sloped,allow upside down}]
            \draw (-4,-4)-- node {\midarrow} (-4,4);
            \draw (-4,-4)-- node {\midarrow} (4,-4);
            \draw [-,double] (4,-4)-- (4,4);  
            \draw (-4,4)-- node {\midarrow} (4,4);
            \draw (-4,-4)-- node {\midarrow} (4,4);
          
            \node [below] at (-4,-4) {$(1,1)$};
            \node [above] at (-4,4) {$(1,2)$};
            \node [above] at (4,4) {$(2,2)$};
            \node [below] at (4,-4) {$(2,2)$};
            
            \node at (0,-8) {$\phi_{2,2}=\phi_{2,3}$};

        \end{scope}
    \end{tikzpicture}
    \end{center}
$\operatorname{Im}(\delta^1\times \delta^1(\Delta^1 \times \Delta^1))$:
    \begin{center}
        \begin{tikzpicture}[scale=0.2]
        
        \begin{scope}[thick, every node/.style={sloped,allow upside down}]
          \draw (-4,-4)-- node {\midarrow} (-4,4);
          \draw (-4,-4)-- node {\midarrow} (4,-4);
          \draw (-4,4)-- node {\midarrow} (4,4);  
          \draw (4,-4)-- node {\midarrow} (4,4);
          \draw (-4,-4)-- node {\midarrow} (4,4);
        
          \node [below] at (-4,-4) {$(0,0)$};
          \node [above] at (-4,4) {$(0,2)$};
          \node [above] at (4,4) {$(2,2)$};
          \node [below] at (4,-4) {$(2,0)$};
          
          \node at (0,-8) {$\phi_{2,0}$};

    \end{scope}
    \end{tikzpicture}
    \begin{tikzpicture}[scale=0.2]
        
        \begin{scope}[thick, every node/.style={sloped,allow upside down}]
           \draw (-4,-4)-- node {\midarrow} (-4,4);
            \draw (-4,-4)-- node {\midarrow} (4,-4);
            \draw [-,double] (4,-4)-- (4,4);  
            \draw (-4,4)-- node {\midarrow} (4,4);
            \draw (-4,-4)-- node {\midarrow} (4,4);
          
            \node [below] at (-4,-4) {$(0,0)$};
            \node [above] at (-4,4) {$(0,2)$};
            \node [above] at (4,4) {$(2,2)$};
            \node [below] at (4,-4) {$(2,2)$};
            
            \node at (0,-8) {$\phi_{2,1}$};

        \end{scope}
    \end{tikzpicture}
    \begin{tikzpicture}[scale=0.2]
        
        \begin{scope}[thick, every node/.style={sloped,allow upside down}]
            \draw (-4,-4)-- node {\midarrow} (-4,4);
            \draw (-4,-4)-- node {\midarrow} (4,-4);
            \draw [-,double] (4,-4)-- (4,4);  
            \draw (-4,4)-- node {\midarrow} (4,4);
            \draw (-4,-4)-- node {\midarrow} (4,4);
          
            \node [below] at (-4,-4) {$(0,0)$};
            \node [above] at (-4,4) {$(0,2)$};
            \node [above] at (4,4) {$(2,2)$};
            \node [below] at (4,-4) {$(2,2)$};
            
            \node at (0,-8) {$\phi_{2,2}=\phi_{2,3}$};

        \end{scope}
    \end{tikzpicture}
    \end{center}
$\operatorname{Im}(\delta^2\times \delta^2(\Delta^1 \times \Delta^1))$:
    \begin{center}
        \begin{tikzpicture}[scale=0.2]
        
        \begin{scope}[thick, every node/.style={sloped,allow upside down}]
          \draw (-4,-4)-- node {\midarrow} (-4,4);
          \draw (-4,-4)-- node {\midarrow} (4,-4);
          \draw (-4,4)-- node {\midarrow} (4,4);  
          \draw (4,-4)-- node {\midarrow} (4,4);
          \draw (-4,-4)-- node {\midarrow} (4,4);
        
          \node [below] at (-4,-4) {$(0,0)$};
          \node [above] at (-4,4) {$(0,1)$};
          \node [above] at (4,4) {$(1,1)$};
          \node [below] at (4,-4) {$(1,0)$};


          \node at (0,-8) {$\phi_{2,0}$};
    \end{scope}
    \end{tikzpicture}
    \begin{tikzpicture}[scale=0.2]
        
        \begin{scope}[thick, every node/.style={sloped,allow upside down}]
            \draw (-4,-4)-- node {\midarrow} (-4,4);
            \draw (-4,-4)-- node {\midarrow} (4,-4);
            \draw (-4,4)-- node {\midarrow} (4,4);  
            \draw (4,-4)-- node {\midarrow} (4,4);
            \draw (-4,-4)-- node {\midarrow} (4,4);
          
            \node [below] at (-4,-4) {$(0,0)$};
            \node [above] at (-4,4) {$(0,1)$};
            \node [above] at (4,4) {$(1,1)$};
            \node [below] at (4,-4) {$(1,0)$};

            \node at (0,-8) {$\phi_{2,1}$};
        \end{scope}
    \end{tikzpicture}
    \begin{tikzpicture}[scale=0.2]
        
        \begin{scope}[thick, every node/.style={sloped,allow upside down}]
            \draw (-4,-4)-- node {\midarrow} (-4,4);
            \draw (-4,-4)-- node {\midarrow} (4,-4);
            \draw [-,double] (4,-4)-- (4,4);  
            \draw (-4,4)-- node {\midarrow} (4,4);
            \draw (-4,-4)-- node {\midarrow} (4,4);
          
            \node [below] at (-4,-4) {$(0,0)$};
            \node [above] at (-4,4) {$(0,1)$};
            \node [above] at (4,4) {$(1,1)$};
            \node [below] at (4,-4) {$(1,1)$};

            \node at (0,-8) {$\phi_{2,2}=\phi_{2,3}$};

        \end{scope}
    \end{tikzpicture}
    \end{center}
    
The key observation to be made from looking at these pictures is that on the diagonal face~$(\delta^l \times \delta^l)(\Delta^1 \times \Delta^1)$ the map~$\phi_{2,s}$ is determined by~$\phi_{1,s}$ if~$l \leq 2-s$ and by~$\phi_{1,s-1}$ if~$l>2-s$. This pattern generalizes to all dimensions~$n$, also for the maps~$\psi_{n,s}$. With this insight in mind we prove the following lemma.   
\newpage      

\begin{lemma}\label{deltalemma}
    The family of simplicial maps~$\{\phi\}_{n,s}$ satisfies for every~$n \geq 1$
    \begin{equation}\label{firstdiagram}
        \begin{tikzcd}
            \Delta^{n-1} \times \Delta^{n-1} \arrow[r,"\delta^l\times \delta^l"] \arrow[d,"\phi_{n-1,s}"'] & \Delta^n \times \Delta^n \arrow[d,"\phi_{n,s}"] \\
            \Delta^{n-1} \times \Delta^{n-1} \arrow[r,"\delta^l \times \delta^l"] & \Delta^n \times \Delta^n ,
        \end{tikzcd}
        \quad     \begin{tikzcd}
            \Delta^{n} \times \Delta^{n} \arrow[r,"\sigma^l\times \sigma^l"] \arrow[d,"\phi_{n,s}"'] & \Delta^{n-1} \times \Delta^{n-1} \arrow[d,"\phi_{n-1,s}"] \\
            \Delta^{n} \times \Delta^{n} \arrow[r,"\sigma^l \times \sigma^l"] & \Delta^{n-1} \times \Delta^{n-1} 
        \end{tikzcd} \quad \text{for } l \leq n-s
    \end{equation}
    and
    \begin{equation}\label{seconddiagram}
        \begin{tikzcd}
            \Delta^{n-1} \times \Delta^{n-1} \arrow[r,"\delta^l\times \delta^l"] \arrow[d,"\phi_{n-1,s-1}"'] & \Delta^n \times \Delta^n \arrow[d,"\phi_{n,s}"] \\
            \Delta^{n-1} \times \Delta^{n-1} \arrow[r,"\delta^l \times \delta^l"] & \Delta^n \times \Delta^n 
        \end{tikzcd}
    \quad
        \begin{tikzcd}
            \Delta^n \times \Delta^n \arrow[r,"\sigma^l\times \sigma^l"] \arrow[d,"\phi_{n,s}"']  & \Delta^{n-1} \times \Delta^{n-1} \arrow[d,"\phi_{n-1,s-1}"] \\
             \Delta^n \times \Delta^n  \arrow[r,"\sigma^l \times \sigma^l"] & \Delta^{n-1} \times \Delta^{n-1}
        \end{tikzcd} \quad \text{for } l > n-s
    \end{equation}
    and likewise for the family~$\{\psi\}_{n,s}$.
\end{lemma}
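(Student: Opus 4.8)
Everything reduces to a finite combinatorial check on vertices. Each object occurring here is a product of standard simplices, hence the nerve of a product poset $[m]\times[m]$, and every map in the four squares --- the diagonal coface $\delta^l\times\delta^l$, the diagonal codegeneracy $\sigma^l\times\sigma^l$, and the maps $\phi_{n,s},\phi_{n-1,s},\phi_{n-1,s-1}$ together with their $\psi$-counterparts --- is the nerve of an order-preserving map between such posets. Since the nerve functor is fully faithful, a square of nerves commutes if and only if the corresponding square of poset maps does, which in turn happens if and only if it commutes on objects (vertices). So the plan is simply to verify, for each vertex $(i,j)$ of the source, that the two composites send $(i,j)$ to the same vertex; no homotopy theory is needed beyond this reduction.

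I would then fix the vertex formulas $\delta^l(q)=q$ for $q<l$ and $\delta^l(q)=q+1$ for $q\ge l$, and $\sigma^l(q)=q$ for $q\le l$ and $\sigma^l(q)=q-1$ for $q>l$, together with (\ref{def1}) and (\ref{def2}), and organize the computation by which branch of the defining case distinction is selected. For $\phi$ the decisive data at $(i,j)$ are whether $j\le i$ and whether the first coordinate exceeds the threshold $n-s$. The heart of the matter --- visible already in the $n=2$ pictures of $\phi_{2,s}$ restricted to the three diagonal faces $(\delta^l\times\delta^l)(\Delta^1\times\Delta^1)$ --- is a one-step threshold shift: $\delta^l$ is strictly monotone, so it preserves the relation $j\le i$ in each coordinate, and for $l\le n-s$ one has $\delta^l(i)>n-s$ exactly when $i>(n-1)-s$, whereas for $l>n-s$ one has $\delta^l(i)>n-s$ exactly when $i>n-s$. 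Consequently the branch of $\phi_{n,s}$ selected by $(\delta^l(i),\delta^l(j))$ is the $\delta^l\times\delta^l$-image of the branch of $\phi_{n-1,s}$ --- respectively $\phi_{n-1,s-1}$ --- selected by $(i,j)$, and the two outputs then coincide on the nose. This establishes the $\delta$-squares of (\ref{firstdiagram}) and (\ref{seconddiagram}).

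The $\sigma$-squares go through the same way, now using that $\sigma^l$ is monotone and surjective; the analogous threshold shift holds throughout the relevant range of $l$. The single subtlety is that $\sigma^l$ is not injective, so $\sigma^l(j)\le\sigma^l(i)$ can fail to reflect $j\le i$ --- this occurs precisely for the one pair of vertices collapsed by $\sigma^l$. But in exactly that case $\phi_{n,s}$ leaves $(i,j)$ fixed whereas $\sigma^l\times\sigma^l$ sends it to the diagonal in any event, so the two composites still land on the same vertex. The family $\{\psi_{n,s}\}$ is handled identically: its definition (\ref{def2}) has the same shape, with one additional clause ``$i<s$ and $j\ge i$'' that is controlled by a threshold of the same type and responds to composition with $\delta^l$ and $\sigma^l$ by the same one-step shift, so it merely contributes a few more subcases.

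The part I expect to be most laborious is purely organizational: arranging the case split --- on $j\le i$ versus $j>i$, on the position of $i$ relative to $l$, and on the position of $i$ and of its image relative to the threshold --- so that every boundary value is covered, the delicate ones being $i=l$, $i$ equal to the threshold itself, and the vertex pair collapsed by $\sigma^l$, plus the extra subcases coming from the second clause of (\ref{def2}) in the $\psi$-case. Conceptually the whole lemma is the single threshold-shift observation applied branch by branch, so the argument involves nothing beyond careful enumeration.
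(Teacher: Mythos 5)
Your overall strategy is exactly the paper's: reduce everything to a vertex-wise check (maps between products of standard simplices are nerves of poset maps, hence determined on $0$--simplices) and verify commutativity branch by branch via the one-step threshold shift; for the left square of (\ref{firstdiagram}) your computation coincides with the one the paper writes out. The gap lies precisely in the parts you (and, to be fair, the paper) dismiss as ``the same manner''. Your claim that the threshold shift for the codegeneracy squares ``holds throughout the relevant range of $l$'' fails at the boundary $l=n-s$: the equivalence $\sigma^l(i)>n-1-s\iff i>n-s$ breaks exactly at $i=l=n-s$. Concretely, for $n=2$, $s=1$, $l=1=n-s$ and the vertex $(1,0)$ one composite of the right-hand square of (\ref{firstdiagram}) gives $(\sigma^1\times\sigma^1)\phi_{2,1}(1,0)=(1,0)$ while the other gives $\phi_{1,1}(\sigma^1\times\sigma^1)(1,0)=(1,1)$; so that square only commutes for $l<n-s$ (and at $l=n-s$ the square of (\ref{seconddiagram}) fails as well, e.g.\ at the vertex $(2,0)$). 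Likewise, $\psi_{n,s}$ is not ``handled identically'' with the stated ranges: its extra clause is thresholded by $s$ from below, not by $n-s$ from above, so the correct ranges are $l\geq s$ for the analogue of (\ref{firstdiagram}) and $l<s$ for that of (\ref{seconddiagram}). For instance, with $n=2$, $s=1$, $l=0\leq n-s$ one finds $\psi_{2,1}(\delta^0\times\delta^0)(0,1)=(1,2)$ whereas $(\delta^0\times\delta^0)\psi_{1,1}(0,1)=(1,1)$.

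These defects are really defects of the lemma as stated, and they are harmless downstream: in the proof of Theorem~\ref{intro:mainresult} the degeneracy compatibility for $\phi$ is only invoked for $l\leq n-1-s$ resp.\ $l>n-s$, and the $\psi$--squares only in the ranges dictated by the faces and degeneracies of the simplices $(2:n-s+1,1:s)$ of $\Lambda^2_2$, which are exactly the corrected ranges above, where the threshold-shift argument does go through. But a complete write-up must record these corrected ranges and check the boundary cases separately, rather than assert, as your proposal does, that every remaining square is the same one-step shift over the ranges given in the statement; carried out literally, that step fails.
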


\begin{proof}
    The proof is straightforward: We compute the images of~$0$--simplices along both sides of the asserted diagrams. As previously mentioned, this suffices because morphisms between products of standard simplices are uniquely determined by the image of~$0$--simplices.
    We first consider the left-hand diagram in (\ref{firstdiagram}). It should commute whenever~$l \leq n-s$. An arbitrary~$0$--simplex~$(i,j)$ is mapped to
    \begin{align*}
        (\delta^l \times \delta^l) \circ \phi_{n-1,s} = \begin{cases}
            (\delta^l \times \delta^l)(i,i) \quad \text{if } i > n-1-s \text{ and } j \leq i \\
            (\delta^l \times \delta^l)(i,j) \quad \text{else}
        \end{cases}
    \end{align*}
    along the lower left composition and 
    \begin{align*}
        \phi_{n,s} \circ (\delta^l \times \delta^l) = 
        \begin{cases}
            (\delta^l \times \delta^l)(i,i) \quad \text{if } \delta^l(i) > n-s \text{ and } \delta^l(j) \leq \delta^l(i)  \\
            (\delta^l \times \delta^l)(i,j) \quad \text{else}
        \end{cases}
    \end{align*}
    along the upper right composition. Observe how the inequalities~$j \leq i$ and~$\delta^l(j) \leq \delta^l(i)$ are equivalent. Moreover, the inequality~$i > n-1-s$ is equivalent to~$\delta^l(i) > n-s$. Indeed, if~$i > n-1-s$, then~$l\leq n-s\leq i$ so that~$\delta^l(i) > n-s$. Conversely, if~$\delta^l(i)>n-s$, then clearly~$i>n-s-1$. Hence the first diagram commutes.  
    
    The commutativity of all the other diagrams is shown in the same manner. No complications arise in the corresponding computations and we will thus not spell them out here.
\end{proof}

With this recursive description of the families~$\{\phi_{n,s}\}$ and~$\{\psi_{n,s}\}$ we can now give the homotopy  
\begin{align*}
   H\colon \id_{\diag \Sh} \Rightarrow \operatorname{pr}_1^* \circ (\id,\id)^*,
\end{align*}
which finishes the proof. 


    \begin{proof}[Proof of Theorem~\ref{intro:mainresult}]
        Our model for the interval will be the~$(2,2)$--horn~$\Lambda^2_2$:~$0\rightarrow 2 \leftarrow 1$. Note that an~$n$--simplex in~$\Lambda^2_2$ is equivalent to a map~$m$ from~$\{0,1,\dots,n\}$ to either~$\{0,2\}$ or~$\{1,2\}$, respecting the ordering. In the first case, we use the notation~$(0:n-s+1,2:s)$, counting the number of times~$m$ meets~$0$ and~$2$. Dually,~$(2:n-s+1,1:s)$ is used in the second case.

        The components of the asserted homotopy are given by 
        \begin{align*}
            H_n \from (\diag\Sh)_n \times (\Lambda^2_2)_n \to (\diag\Sh)_n
        \end{align*}
        where 
        \begin{align*} 
            H_n(\rho,t) = \begin{cases}
                                \rho \circ \phi_{n,s} \quad t=(0:n-s+1,2:s) \\
                                \rho \circ \psi_{n,s} \quad t=(2:n-s+1,1:s)
                            \end{cases}
        \end{align*}
        For this to constitute a simplicial map, there must be commutative diagrams
        \begin{equation*}
            \begin{tikzcd}
                (\diag\Sh)_n \times (\Lambda^2_2)_n  \arrow[r,"H_n"] \arrow[d,"d_l"] & (\diag\Sh)_n \arrow[d,"d_l"] \\ 
                (\diag\Sh)_{n-1} \times (\Lambda^2_2)_{n-1} \arrow[r,"H_{n-1}"] & (\diag\Sh)_{n-1}
            \end{tikzcd}
        \end{equation*}
        whenever~$0 \leq l \leq n$ and
        \begin{equation*}
            \begin{tikzcd}
                (\diag\Sh)_{n-1} \times (\Lambda^2_2)_{n-1}  \arrow[r,"H_{n-1}"] \arrow[d,"s_l"] & (\diag\Sh)_{n-1} \arrow[d,"s_l"] \\ 
                (\diag\Sh)_{n} \times (\Lambda^2_2)_{n} \arrow[r,"H_{n}"] & (\diag\Sh)_{n}
            \end{tikzcd}
        \end{equation*}
        whenever~$0 \leq l < n$. We will only verify the case~$t=(0:n-p+1,2:p)$. This is because of how~$t=(2:n-p+1,1:p)$ is completely analogous. The upper right composition in the first diagram is:
        \begin{align*}
            d_l H_n(\rho,(0:n-s+1,2:s)) 
            =&d_l(\rho \circ \phi_{n,s}) \\
            =&\rho \circ \phi_{n,s} \circ (\delta^l \times {\delta^l}) 
        \end{align*}    
        Hence, we deduce
        \begin{align*} \label{homotopy}
            d_lH_n(\rho,t) = \begin{cases}
                                \rho \circ (\delta^l \times \delta^l) \circ \phi_{n-1,s} \quad l\leq n-s \\
                                \rho \circ (\delta^l \times \delta^l) \circ \phi_{n-1,s-1} \quad l>n-s
                            \end{cases}
        \end{align*}
        due to the left hand diagrams~(\ref{firstdiagram}) and~(\ref{seconddiagram}) given in Lemma~\ref{deltalemma}. 
        Since
        \begin{align*}
            d_l(\rho,t) = \begin{cases}
                               (d_l\rho,(0:n-s,2:s)) \quad l\leq n-s \\
                                (d_l\rho,(0:n-s+1,2:s-1)) \quad l>n-s,
                            \end{cases}
        \end{align*}
        we have~$d_l H_n(\rho,t)=H_{n-1}d_l(\rho,t)$. This establishes the commutativity of the first diagram.
        
        Using the right-hand diagrams in (\ref{firstdiagram}) and (\ref{seconddiagram}) we can show compatibility with the degeneracy maps in the same way. This concludes the construction of the homotopy and thus the proof. 
        \end{proof}

\section{The section spectral sequence}
\label{section:ReebGraphs}

We apply homology to the section spaces~$(\S_h)_p$ and assemble the resulting vector spaces into chain complexes that we term with the name \textit{Reeb complexes}. Due to Corollary \ref{corollary:reebcomplexes} of Section 4, these Reeb complexes serve as a completely combinatorical model for the Reeb complexes associated to continuous height functions, as defined in \cite{Vaupel_TDA_2022}. As a bisimplicial set, the section complex has an associated spectral sequence. We name it \textit{section spectral sequence}.  Theorem \ref{intro:mainresult} then implies that the section spectral sequence computes the homology of the height functions base space in general. The Reeb complexes can be extracted from the first page of the section spectral sequence (Proposition \ref{prop:spectralsequence}) and thus provide a first order approximation of the homology of~$X$ in terms of the homology of section spaces.     

\subsection{Reeb Complexes}
\label{section:ReebComplexes}


Recall that the section complex,~$\S_h$, consists of all section spaces~$(\S_h)_p$. We can thus apply any homology functor~$\homology_q$ to~$(\S_h)_p$ and induce~$\homology_q d^h_i\colon \homology_q (\S_h)_p\rightarrow \homology_q(\S_h)_{p-1}$. This defines a simplicial vector space~$\homology_q\S_h$, because every set of~$p$--simplices,~$\homology_q(\S_h)_p$, is a vector space. Furthermore, for any simplicial vector space~$V$, there is a complex~$\mathrm{C}V$, called the \emph{Moore Complex}. Its~$p$th entry~$\mathrm{C} V_p$ is equal to the vector space~$V_p$, and its differential is induced by the alternating sum of face maps,~$\partial=\sum (-1)^i d_i$. Denote by~$\mathrm{D}V$ the subcomplex of~$\mathrm{C}V$ whose~$p$th entry only consists of the degenerate~$p$--simplices in~$V_p$. The differential induces a well-defined differential~$\mathrm{C}V_p/\mathrm{D}V_p\rightarrow \mathrm{C}V_{p-1}/\mathrm{D}V_{p-1}$ from which we define the \emph{non-degenerate complex}~$\mathrm{C}V/\mathrm{D}V$.

\begin{definition}
    For a height function~$h\colon X \rightarrow \R$ and integer~$q\geq 1$, we define the~$q$th \emph{Reeb complex}~$\mathcal{G}_q$ as the chain complex~$\mathrm{C}(\homology_q\S_h)/ \mathrm{D}(\homology_q \S_h)$.
\end{definition}
\begin{example}
\label{example:explicitReebComplex}
The~$q$th Reeb complex associated to a height function~$h\colon X \rightarrow \R$ has
\[
(\mathcal{G}_q)_p=\homology_q\left(\bigsqcup \Sh(\bar{a})\right)\simeq \bigoplus \homology_q \S_h[\bar{a}]
\]
as its~$p$th entry, ranging over all~\emph{increasing} sequences~$\bar{a}=(a_0,\dots,a_p)$ in~$\R_p$. 
\end{example}

Reeb complexes provide an approximate tool to better understand homological features of the underlying space~$X$. This is achieved by understanding how homology generators flow between height levels and along sections.

\begin{example}
\label{example:sh}
Recall the standard~$2$--simplex~$\Delta^2$, with heights as indicated by the labels:
\begin{center}
\begin{tikzpicture}[scale= 0.7]

    \node[left] at (0,0) {$0$};
    \node[above] at (2,2) {$1$};
    \node[right] at (4,0) {$2$};
    
    \draw[fill=gray!20] (0,0) -- (2,2) -- (4,0) -- cycle;
    
\end{tikzpicture}
\end{center}
We glue two copies of~$\Delta^2$ together along their boundary~$\partial\Delta^2$ to obtain~$\Delta^2\coprod_{\partial\Delta^2}\Delta^2$, a simplicial model for the~$2$--sphere. Label~$0$ and~$1$--simplices by their integers, e.g.~$01$ is the~$1$--simplex from~$0$ to~$1$. The two~$2$--simplices sharing a common boundary, are denoted~$a$ and~$b$. 
To determine a basis for~$\homology_q\S_h$, we identify the homotopy types of all section spaces, indexed by increasing sequences:

\begin{center}
 \begin{tabular}{ |c | ccccccc | } 
 \hline
 $\bar{a}$ & $(0)$ & $(1)$ & $(2)$ & $(0,1)$ & $(0,2)$ & $(1,2)$ & $(0,1,2)$\\ 
 \hline
 $\S_h[\bar{a}]$ & $\{0\}$ & $\{1\}$ & $\{2\}$ & $\{01\}$ & $\{02\}$ & $\{12\}$ & $\{a, b\}$\\ 
 \hline
 Homotopy type & pt & pt & pt & pt & pt & pt & $\text{pt}\coprod \text{pt}$\\ 
 \hline
\end{tabular}
\end{center}
Hence, all Reeb complexes with~$q \geq 1$ are trivial. For~$q=0$, however, we determine the boundary maps
\[
\partial_1\colon \oplus \homology_0 (\Sh)_1 \rightarrow \oplus \homology_0 (\Sh)_0 \text{  and  } \partial_2 \colon \homology_0 (\Sh)_2 \rightarrow \oplus \homology_0 (\Sh)_1.
\]
Picking the evident bases from the above table yields
\[
\partial_1 = 
\begin{bmatrix}
    -1 & -1  & 0   \\
    1  & 0 & -1  \\
    0  & 1  &  1  \\
\end{bmatrix}
\text{ and }
\partial_2 = 
\begin{bmatrix}
    1 & 1  \\
    -1  & -1 \\
    1  & 1  \\
\end{bmatrix}
\]
in coordinates. As an example, the first column of~$\partial_1$ is obtained by applying the target~$d_0^h$ and source~$d_1^h$ to generators in~$\homology \S_h[0,1]$:~$\partial_1[01]=[1]-[0]$. Hence, we can present
\[
\mathcal{G}_0 \colon \; k^3 \xleftarrow{\partial_1} k^3 \xleftarrow{\partial_2} k^2.
\]
Elementary linear algebra gives~$\homology_0 \mathcal{G}_0 = k$ and~$\homology_2 \mathcal{G}_0 = k$, whereas other homology groups are trivial. In this particular example, the zeroth Reeb complex carries the homology of the underlying space~$\Delta^2\coprod_{\partial\Delta^2} \Delta^2$.
\end{example}

\begin{definition}
\label{definition:subdivided}
We say that a simplicial set~$X$ is~\emph{subdivided} according to a height function~$h\colon X\rightarrow \R$ if all section spaces~$\S_h[a,b]$ are empty whenever there is an intermediate height level~$a<c<b$.
\end{definition}
Whenever~$X$ is subdivided with respect to a height function~$h\colon X\rightarrow \R$, the Reeb complexes only have two non-zero entries~$(\mathcal{G}_q)_p$. Indeed, the~$p$th entry,~$\oplus \S_h[a_0,\dots,a_p]$, of the formula in Example~\ref{example:explicitReebComplex}, is zero for~$p\geq 2$. Thus, choosing coordinates in this case reduces the information contained in~$\mathcal{G}_q$ to a single matrix. Interpreting this matrix as an \emph{incidence matrix} provides a graph which gives insight as to how homology generators flow across height levels. This is illustrated with an example.
\begin{example}
\label{example:subdivision}
We subdivide~$\Delta^2$ according to the heights given in Example~\ref{example:sh}:
\begin{center}
\begin{tikzpicture}[scale= 0.7]

    \node[left] at (0,0) {$0$};
    \node[above] at (2,2) {$1$};
    \node[below] at (2,0) {$1'$};
    \node[right] at (4,0) {$2$};
    
    \draw[fill=gray!20] (0,0) -- (2,2) -- (2,0) -- cycle;
    \draw[fill=gray!20] (2,2) -- (2,0) -- (4,0) -- cycle;
\end{tikzpicture}
\end{center}

The subdivided~$2$--simplex still maps to~$\R$ by also sending~$1'\mapsto 1$ in~$\R_0$. We construct a space as in the previous example, by gluing two copies of the subdivided~$2$--simplex together along the boundary defined by the cycle~$0\rightarrow 1\rightarrow 2 \leftarrow 1' \leftarrow 0$. It is not difficult to determine the homotopy types of the section spaces:

\begin{center}
 \begin{tabular}{ |c | ccccccc | } 
 \hline
 $\bar{a}$ & $(0)$ & $(1)$ & $(2)$ & $(0,1)$ & $(0,2)$ & $(1,2)$ & $(0,1,2)$\\ 
 \hline
 $\S_h[\bar{a}]$ & pt & $S^1$ & pt & $S^1$ & $\emptyset$ & $S^1$ & $\emptyset$\\ 
 \hline
\end{tabular}
\end{center}

For instance, the homotopy type of~$\S_h[0,1]$ is given as follows. There are two~$0$--sections represented by the edges~$01$ and~$0 1'$. Each copy of the subdivided~$2$--simplex provides a~$(1,1)$-section, i.e. a homotopy
\begin{center}
\begin{tikzpicture}[scale= 0.7]

    \node[left] at (0,0) {$0$};
    \node[above] at (2,2) {$1$};
    \node[below] at (2,0) {$1'$};
    
    \draw[fill=gray!20] (0,0) -- (2,2) -- (2,0) -- cycle;

\end{tikzpicture}
\end{center}

between the two sections~$01$ and~$01'$, but no higher simplices connect them. Thus,~$\S_h[0,1]$ is isomorphic to two~$1$--simplices glued tail to tail and head to head. The horizontal face maps used to calculate Reeb complexes can be depicted:
\begin{center}
\begin{tikzpicture}[scale= 0.7]

    \node[left] at (0,0) {$0$};
    \node[above] at (2,2) {$1$};
    \node[below] at (2,0) {$1'$};
    
    \draw[fill=gray!20] (0,0) -- (2,2) -- (2,0) -- cycle;
    
    \node[above] at (6,2) {$1$};
    \node[below] at (6,0) {$1'$};
    
    \draw[thick] (6,2)--(6,0);
    \draw[<-] (5,1) -- (3,1);
    \node[above] at (4,1) {$d_0^h$};
    
    \node[] at (-4,0) {0};
    \draw[<-] (-3,1) -- (-1,1);
    \node[above] at (-2,1) {$d_1^h$};
\end{tikzpicture}
\end{center}
This translates to~$\homology_1\S_h[0,1] \xrightarrow{0} \homology_1\S_h[0]$ and~$\homology_1\S_h[0,1] \xrightarrow{1} \homology_1\S_h[1]$ on~$\homology_1$.
We calculate the two non-trivial Reeb complexes in coordinates
\[
\mathcal{G}_0\colon \; k^3 \xleftarrow{\partial} k^2, \text{ with } \partial = 
\begin{bmatrix}
    -1 & 0   \\
    1  & -1  \\
    0  & 1   \\
\end{bmatrix}
\]
and
\[
\mathcal{G}_1\colon \;k \xleftarrow{\partial'} k^2,\text{ with } \partial' = 
\begin{bmatrix}
    1 & -1
\end{bmatrix}.
\]
To draw the associated graphs, we think of the basis elements in~$(\mathcal{G}_0)_0$ as vertices whereas the basis in~$(\mathcal{G}_0)_1$ defines edges. Then the first column in~$\partial$ tells us that the edge given by the first basis vector in~$k^2$ connects the first and second vertices (basis elements) in~$k^3$. For~$\mathcal{G}_1$ we have to take a bit care as we have two edges and one vertex. One edge starts in the vertex, the other ends in it. This is due to face maps being sent to zero maps in~$\homology_1$, a phenomenon that does not occur for~$\homology_0$. We can assemble this information in a barcode-like diagram:
\begin{center}
\begin{tikzpicture}[main_node/.style={circle, fill=black!100, scale= 0.2}]

    \filldraw (0,0) circle (1.5pt);
    \filldraw (2,0) circle (1.5pt);
    \filldraw (4,0) circle (1.5pt);
    \node[] at (-1,0) {$\homology_0$};
    \draw (0,0) -- (4,0);
    
    \filldraw (2,-1) circle (1.5pt);
    \draw (0,-1) circle (1.5pt);
    \draw (4,-1) circle (1.5pt);
    \node[] at (-1,-1) {$\homology_1$};
    \draw[] (0.07,-1) -- (3.93,-1);
    
    \draw[->] (-1,-2) -- (5,-2);
    
    \node[below] at (0,-2) {$0$};
    \node[below] at (2,-2) {$1$};
    \node[below] at (4,-2) {$2$};
    \node[right] at (5,-2) {$\mathbb{R}$};
\end{tikzpicture}
\end{center}
\end{example}

In the previous example, the reader familiar with Reeb graphs may have observed that the graph determined by introducing coordinates to~$\mathcal{G}_0$ is the Reeb graph of the given height function. This observation is true in general, if~$X$ is subdivided according to~$h\colon X\rightarrow \R$.
\begin{proposition}
\label{proposition:isReebGraph}
If~$X$ is subdivided according to a height function~$h\colon X\rightarrow \R$. Then the simplicial set~$\pi_0 \S_h$ is the Reeb graph of~$h$. In particular, the zeroth Reeb complex~$\mathcal{G}_0$ computes the homology of the associated Reeb graph.
\end{proposition}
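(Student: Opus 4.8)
The plan is to reduce the homology statement to the structural one, $\pi_0\S_h\cong R_h$ (where $R_h$ denotes the Reeb graph of $h$), and then to prove $\pi_0\S_h\cong R_h$ by recognizing $\pi_0\S_h$ as a cell structure on $R_h$. The reduction is immediate: over a field, $\homology_0$ of a disjoint union of simplicial sets is the free vector space on the set of path components, so the simplicial vector space $\homology_0\S_h$, whose degree-$p$ part is $\homology_0\bigsqcup_{\bar a}\S_h[\bar a]$, is canonically the free simplicial vector space $k[\pi_0\S_h]$. Hence $\mathcal{G}_0=\mathrm{C}(\homology_0\S_h)/\mathrm{D}(\homology_0\S_h)$ is precisely the normalized chain complex of the simplicial set $\pi_0\S_h$, so $\homology_\ast\mathcal{G}_0=\homology_\ast(\pi_0\S_h)=\homology_\ast(|\pi_0\S_h|)$; it therefore suffices to identify $\pi_0\S_h$ with $R_h$.

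First I would check that $\pi_0\S_h$ is a graph, i.e.\ has no nondegenerate simplices above dimension $1$. The crucial point is that each horizontal degeneracy $s^h_j\colon\S_h[\bar b]\to\S_h[s^h_j\bar b]$ is a bijection on $\pi_0$: the collapse $\sigma^j$ has the inclusion $\delta^j$ as a section, and the pointwise inequality $\id_{[p]}\le\delta^j\sigma^j$ produces a simplicial homotopy $\id\Rightarrow\delta^j\sigma^j$; because $\bar a:=s^h_j\bar b=\bar b\circ\sigma^j$ already factors through $\sigma^j$, composing this homotopy with a section over $\bar a\circ\pr_0$ stays over $\bar a\circ\pr_0$, which yields the inverse on components (injectivity follows by restricting a connecting homotopy along $\delta^j$). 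Thus nondegenerate $p$-simplices of $\pi_0\S_h$ are indexed by a strictly increasing $\bar a=(a_0<\dots<a_p)\in\R_p$ together with a component of $\S_h[\bar a]$. For $X$ subdivided and $p\ge2$, the level $a_1$ lies strictly between $a_0$ and $a_p$, so $\S_h[a_0,a_p]=\emptyset$, and since dropping the intermediate labels is a simplicial map $\S_h[\bar a]\to\S_h[a_0,a_p]$, we get $\S_h[\bar a]=\emptyset$. Therefore $\pi_0\S_h$ is one-dimensional: its vertices are $\bigsqcup_{a}\pi_0(h^{-1}a)$ (using $\S_h[a]=h^{-1}a$), its edges are $\bigsqcup_{a<b\text{ adjacent}}\pi_0\S_h[a,b]$, and an edge $[e]\in\pi_0\S_h[a,b]$ is attached to $d^h_1[e]$ and $d^h_0[e]$, the components of its source and target vertices.

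Next I would match this with the Reeb graph $R_h=|X|/\!\sim$, the quotient of the realization by the relation identifying points lying in the same component of a level set of the associated piecewise linear map $f\colon|X|\to\mathbb{R}$. Since $X$ is subdivided, $R_h$ is a graph with a $0$-cell per component of $f^{-1}(a)$ for $a$ a height level and a $1$-cell per component of $f^{-1}(c)$ for $c$ strictly between adjacent levels $a<b$, the $1$-cell being glued to the components of $f^{-1}(a)$ and $f^{-1}(b)$ onto which its slice limits. Subdividedness forces every carrier simplex to meet at most two adjacent levels, so a point of height exactly $a$ lies in the height-$a$ face of its carrier; hence $f^{-1}(a)=|h^{-1}a|$ and the vertex sets match. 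For the edges, one argues inside the subcomplex $X_{[a,b]}$ on simplices with vertices in $\{a,b\}$: $f^{-1}((a,b))$ deformation retracts onto $f^{-1}(c)$, each point of $f^{-1}(c)$ lies in a carrier simplex with a vertex at $a$ and a vertex at $b$, and sending it to such a connecting edge gives a map $\pi_0 f^{-1}(c)\to\pi_0\S_h[a,b]$; the higher simplices of $X_{[a,b]}$ realize to paths in $f^{-1}(c)$ joining the slices of the vertical faces of the corresponding $(1,q)$-sections, which makes this map well defined and bijective, while the limits of a slice as $c\to a^+$ and $c\to b^-$ recover the source- and target-components — so the attaching maps agree and $\pi_0\S_h\cong R_h$.

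I expect the edge identification $\pi_0\S_h[a,b]\cong\pi_0 f^{-1}(c)$ to be the main obstacle, as it is the one step where the purely combinatorial section space must be compared with the geometric slice. Subdividedness is exactly what makes it work: a $1$-section over $[a,b]$ can only exist for adjacent $a,b$ and is then a genuine edge between the two levels (contrast Example~\ref{example:standard2}, where $\S_h[0,2]$ is nonempty because the level $1$ is skipped), and a $(1,q)$-section over $[a,b]$ is a prism spanning only the levels $a$ and $b$, so passing to a mid-level slice can neither create nor destroy components. A less self-contained alternative — which also yields the homology statement directly — is to identify $\S_h[a,b]$ with a simplicial model of the topological section space $\mathrm{Sect}_f[a,b]$ of \cite{trygsland2021} (the identification underlying Proposition~\ref{proposition:comparereeb}) and then invoke the deformation retraction $\mathrm{Sect}_f[a,b]\simeq f^{-1}(c)$ between adjacent critical levels.
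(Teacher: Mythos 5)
Your proposal is correct, but it proves the statement along a genuinely different route than the paper. The paper's own proof is a two-line reduction: it invokes Proposition~\ref{proposition:comparisonWithTopological} (the homology equivalence $|\S_h[a,b]|\to\mathrm{Sect}_f[a,b]$ for successive height levels, proved only later in Section~\ref{section:topologicalspaces!?}) together with Theorem~1.2 of \cite{trygsland2021}, which already identifies the component data of the topological section spaces with the Reeb graph --- that is, precisely the ``less self-contained alternative'' you mention in your last paragraph. Your primary argument is instead purely combinatorial/PL and self-contained: you first observe that, over a field, $\mathcal{G}_0$ is the normalized chain complex of the simplicial set $\pi_0\S_h$, reducing the homology claim to the structural one; you then show $\pi_0\S_h$ is $1$--dimensional by an explicit simplicial homotopy proving that horizontal degeneracies are surjective on components (combined with the emptiness of $\S_h[\bar a]$ for strictly increasing $\bar a$ of length $\geq 2$, which follows from subdividedness via the face map to $\S_h[a_0,a_p]$); finally you match the vertices and edges of $\pi_0\S_h$ with the cells of the Reeb graph of the associated piecewise linear map. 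This buys independence from both Proposition~\ref{proposition:comparisonWithTopological} (avoiding the paper's forward reference) and the external Theorem~1.2, and it exhibits $\pi_0\S_h$ directly as a simplicial model of the Reeb graph; the paper's route is shorter and reuses machinery it needs anyway for Proposition~\ref{proposition:comparereeb}. The one step of yours that is a sketch rather than a proof is the bijection $\pi_0\S_h[a,b]\cong\pi_0 f^{-1}(c)$, which you flag yourself; the ingredients you name (carriers of points at level $c$ have vertices exactly at the two adjacent levels $a$ and $b$, level sets of such simplices are connected, squares inside a single simplex connect its $a$-to-$b$ edges, and $(1,q)$--sections slice to paths in $f^{-1}(c)$) are the right ones and assemble into a complete argument by a finite chain-of-carriers induction, so I see no gap in substance.
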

\begin{proof}
The result is an immediate consequence of Proposition~\ref{proposition:comparisonWithTopological}, to be proved in Section~\ref{section:topologicalspaces!?}, and Theorem~$1.2$ in~\cite{trygsland2021}.
\end{proof}

\subsection{Background on spectral sequences}
\label{section:ss}
In the present Section \ref{section:ss} and the following Section \ref{section:sectionspectralsequence} we make the simplifying assumption of taking homology with field coefficients. Everything works as well for arbitrary generalised homology theories and our only aim is a slight simplification of the presentation.\\

A double chain complex~$C$ is a collection~$C_{p,q}$ of vector spaces together with horizontal and vertical boundary maps~$\partial_h\colon C_{p,q}\rightarrow C_{p-1,q}$ and~$\partial_v\colon C_{p,q} \rightarrow C_{p,q-1}$. The maps are further required to satisfy~$\partial_h^2=0$,~$\partial_v^2=0$ and~$\partial_v\partial_h=\partial_h\partial_v$. We always assume a double chain complex to be contained within the first quadrant, so that all entries with~$p$ or~$q$ negative are zero. To a double complex~$C$, we can functorially associate a chain complex~$\Tot C$, the \emph{total complex} of~$C$ with~$\Tot C_n=\oplus_{p+q=n} C_{p,q}$.

There is a functor~$\F\colon \ssSet \rightarrow \dCh$. It sends a bisimplicial set~$X$ to the double complex~$\F X$ with~$(\F X)_{p,q}=\F X_{p,q}$, the free vector space on~$X_{p,q}$. The horizontal and vertical boundary maps are induced by the horizontal and vertical face maps:~$\partial_h=\sum (-1)^i d_i^h$ and~$\partial_v=\sum (-1)^i d_i^v$. Total complexes thus define a functor~$\ssSet\rightarrow \Ch$, by mapping a bisimplicial set~$X$ to the total complex~$\Tot\F X$. A theorem of Dold and Puppe~\cite{dold1961homologie,goerss2009simplicial} tells us that~$\Tot \F X$ is naturally homology equivalent to~$\diag X$, the diagonal on~$X$:
\[
\homology_{\ast} \Tot \F X \simeq \homology_\ast \diag X.
\]
Therefore in order to understand the homology of~$\diag X$, one may rather consider the homology~$\Tot \F X$. One advantage of the total complex, is that it comes with a~\emph{spectral sequence} for computing its homology. The following is a brief recap of how this computational tool works. We refer to~\cite{mccleary2001user} for a more in-depth introduction.

Given a double complex~$C$, we define the zeroth page of the spectral sequence~$\page^0_{p,q}=C_{p,q}$ and remember only the vertical boundary maps~$\partial_v=\partial^0$:
\begin{center}
\begin{tikzpicture}
	\draw (0,0) -- (8.5,0);
	\draw (0,0) -- (0,4.5);
	
	\draw[->] (2,2.25) -- (2,1.25);
	\draw[->] (2,3.75) -- (2,2.75);
	
	\draw[->] (5,2.25) -- (5,1.25);
	\draw[->] (5,3.75) -- (5,2.75);
	
	\draw[->] (8,2.25) -- (8,1.25);
	\draw[->] (8,3.75) -- (8,2.75);
	
	\node [right] at (2,1.75) {$\partial^0_{0,1}$};
	\node [right] at (2,3.25) {$\partial^0_{0,2}$};
	
	\node [right] at (5,1.75) {$\partial^0_{1,1}$};
	\node [right] at (5,3.25) {$\partial^0_{1,2}$};
	
	\node [right] at (8,1.75) {$\partial^0_{2,1}$};
	\node [right] at (8,3.25) {$\partial^0_{2,2}$};
	
	\node [] at (2,1) {$C_{0,0}$};
	\node [] at (2,2.5) {$C_{0,1}$};
	\node [] at (2,4) {$C_{0,2}$};
	
	\node [] at (5,1) {$C_{1,0}$};
	\node [] at (5,2.5) {$C_{1,1}$};
	\node [] at (5,4) {$C_{1,2}$};
	
	\node [] at (8,1) {$C_{2,0}$};
	\node [] at (8,2.5) {$C_{2,1}$};
	\node [] at (8,4) {$C_{2,2}$};

	\node [right] at (8.5,0) {$p$};
	\node [above] at (0,4.5) {$q$};
\end{tikzpicture}
\end{center}

Applying homology produces the first page~$\page^1_{p,q}=\homology_q C_{p,q}$ with induced differentials~$\partial^1$ from the horizontal differentials of~$C$.

\begin{center}
\begin{tikzpicture}
	\draw (0,0) -- (8.5,0);
	\draw (0,0) -- (0,4.5);
	
	\draw[->] (4.25,1) -- (2.75,1);
	\draw[->] (4.25,2.5) -- (2.75,2.5);
	\draw[->] (4.25,4) -- (2.75,4);
	
	\draw[->] (7.25,1) -- (5.75,1);
	\draw[->] (7.25,2.5) -- (5.75,2.5);
	\draw[->] (7.25,4) -- (5.75,4);
	
	\node [above] at (3.5,1) {$\partial^1_{1,0}$};
	\node [above] at (3.5,2.5) {$\partial^1_{1,1}$};
	\node [above] at (3.5,4) {$\partial^1_{1,2}$};
	
	\node [above] at (6.5,1) {$\partial^1_{2,0}$};
	\node [above] at (6.5,2.5) {$\partial^1_{2,1}$};
	\node [above] at (6.5,4) {$\partial^1_{2,2}$};
	
	\node [] at (2,1) {$\homology_0 C_{0,0}$};
	\node [] at (2,2.5) {$\homology_1 C_{0,1}$};
	\node [] at (2,4) {$\homology_2 C_{0,2}$};
	
	\node [] at (5,1) {$\homology_0 C_{1,0}$};
	\node [] at (5,2.5) {$\homology_1 C_{1,1}$};
	\node [] at (5,4) {$\homology_2 C_{1,2}$};
	
	\node [] at (8,1) {$\homology_0 C_{2,0}$};
	\node [] at (8,2.5) {$\homology_1 C_{2,1}$};
	\node [] at (8,4) {$\homology_2 C_{2,2}$};

	\node [right] at (8.5,0) {$p$};
	\node [above] at (0,4.5) {$q$};
\end{tikzpicture}
\end{center}
Computing homology yet again gives the second page~$\page^2_{p,q}=\homology_p \homology_q C_{p,q}$. There are also induced maps on the second page~$\partial^2_{p,q}\colon \page^2_{p,q}\rightarrow \page^2_{p-2,q+1}$. One can show that the following description on the level of representatives is well-defined. If~$[c]$ in~$\page^1_{p,q}=\homology_q C_{p,q}$ represents an element~$\alpha$ in~$\page^2_{p,q}$, then it is mapped to zero under~$\partial^1_{p,q}[c]=[\partial_h c]$. This in turn means that~$\partial_h c$ is in the image of~$\partial_v=\partial^0_{p-1,q+1}$. Hence there is a~$b$ in~$C_{p-1,q+1}$ such that~$\partial_v b= \partial_h c$ and applying~$\partial_h$ then produces an element~$\partial_h b$ which can be verified to represent an element in~$\page^1_{p-2,q+1}$. Denote by~$\beta$ the element in~$\page^2_{p-2,q+1}$ represented by~$[\partial_h b]$, and define~$\partial^2_{p,q}\alpha =\beta$. This is, of course, difficult to compute in general.

\begin{center}
\begin{tikzpicture}
	\draw (0,0) -- (8.5,0);
	\draw (0,0) -- (0,4.5);
	
	\draw[->] (7,1.25) -- (3,2.25);
	\draw[->] (7,2.75) -- (3,3.75);
	
	\node [above] at (6.25,1.5) {$\partial^2_{2,0}$};
	\node [above] at (6.25,3) {$\partial^2_{2,1}$};
	
	\node [] at (2,1) {$\homology_0 \homology_0 C_{0,0}$};
	\node [] at (2,2.5) {$\homology_0 \homology_1 C_{0,1}$};
	\node [] at (2,4) {$\homology_0 \homology_2 C_{0,2}$};
	
	\node [] at (5,1) {$\homology_1 \homology_0 C_{1,0}$};
	\node [] at (5,2.5) {$\homology_1 \homology_1 C_{1,1}$};
	\node [] at (5,4) {$\homology_1 \homology_2 C_{1,2}$};
	
	\node [] at (8,1) {$\homology_2 \homology_0 C_{2,0}$};
	\node [] at (8,2.5) {$\homology_2 \homology_1 C_{2,1}$};
	\node [] at (8,4) {$\homology_2 \homology_2 C_{2,2}$};

	\node [right] at (8.5,0) {$p$};
	\node [above] at (0,4.5) {$q$};
\end{tikzpicture}
\end{center}
The process now iterates:~$\page^3_{p,q}$ is defined as the homology at~$\page^2_{p,q}$. There are induced differentials~$\partial^3_{p,q}\colon \page^3_{p,q}\rightarrow \page^3_{p-3,q+2}$, much like in the case of~$\page^2$. What we end up with is a collection~$\page^r_{p,q}$ of vector spaces together with differentials~$\partial^r_{p,q}\colon \page^r_{p,q}\rightarrow \page^r_{p-r,q-1+r}$ satisfying that~$\page^{r+1}_{p,q}$ is obtained from~$\page^r_{p,q}$ by computing homology. Note that the process terminates; at some point~$\page^{r+n}_{p,q}\simeq \page^{r}_{p,q}$ for all~$n\geq 0$. This is because of how differentials must eventually be zero when they leave the first quadrant in the~$(p,q)$--plane. Let~$\page^{\infty}_{p,q}$ be the stable value of~$\page^r_{p,q}$. It is well-known that
\[
\homology_n \Tot C\simeq\oplus_{p+q=n} \page^{\infty}_{p,q}.
\]
Thus, if~$C=\F X$ for some bisimplicial set~$X$, then we have described a procedure to compute~$\homology_\ast \diag X$ from~$\Tot\F X$.

\subsection{The section spectral sequence}\label{section:sectionspectralsequence}
The previous Section implies the existence of a spectral sequence associated to~$\S_h$ which calculates the homology of~$\diag \S_h$.
\begin{definition}
The~\emph{section spectral sequence} of a height function~$h\colon X\rightarrow \R$ is the spectral sequence naturally associated to~$\S_h$.
\end{definition}
Entries on the zeroth page are determined by the free double complex~$\F \S_h$. Explicitly,~$\page^0_{p,q}=(\F\S_h)_{p,q}$ is the free vector space on~$\coprod_{\bar{a}\in \R_p} \S_h[\bar{a}]_q$, ranging over all non-decreasing real-valued sequences~$\bar{a}=(a_0,\dots,a_p)$. Differentials~$\partial^0_{p,q}\colon \page^0_{p,q}\rightarrow \page^0_{p,q-1}$ are induced from the alternating sum of vertical face maps~$\sum_i (-1)^i d_i^v$ in the spatial~$q$--direction. Computing homology vertically (in the~$q$--direction) thus produces the entries of the first page~$\page^1_{p,q}=\oplus_{\bar{a}\in\R_p} \homology_q \S_h[\bar{a}]$. Differentials on the first page are then induced in homology from the alternating sum of the horizontal face maps in the section~$p$--direction~$\sum_i (-1)^i\homology_qd_i^h~$. Proceeding as in Section~\ref{section:ss}, the section spectral sequence tells us how to recover the homology of~$\diag \S_h$, and thus of $X$.

\begin{proposition}[Proposition~\ref{prop:spectralsequenceintro}]
\label{prop:spectralsequence}
Let~$h\colon X\rightarrow \R$ be a height function. The associated section spectral sequence satisfies~$\page^2_{p,q}\simeq \homology_p \mathcal{G}_q$ and converges to the homology of~$X$:
\[
\homology_p \mathcal{G}_q \Rightarrow \homology_{p+q} X
\]
\end{proposition}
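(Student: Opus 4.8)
The plan is to read off the first page of the section spectral sequence as the Moore complexes of the simplicial vector spaces $\homology_q\S_h$, to deduce the second-page identification from the normalization theorem for simplicial modules, and then to obtain convergence by combining the generic convergence of a first-quadrant double-complex spectral sequence with Theorem~\ref{intro:mainresult}.

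First I would recall from Section~\ref{section:sectionspectralsequence} that $\page^0_{p,q}=(\F\S_h)_{p,q}$ with vertical differential $\partial^0=\sum_i(-1)^i d_i^v$, so that taking vertical homology gives $\page^1_{p,q}=\bigoplus_{\bar a\in\R_p}\homology_q\S_h[\bar a]=\homology_q(\S_h)_p$, and that the induced first-page differential $\partial^1_{\bullet,q}$ is the alternating sum $\sum_i(-1)^i\homology_q d_i^h$. Since the horizontal face and degeneracy maps equip $\homology_q\S_h$ with the structure of a simplicial vector space, the chain complex $(\page^1_{\bullet,q},\partial^1_{\bullet,q})$ is by definition its Moore complex $\mathrm{C}(\homology_q\S_h)$.

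Next I would invoke the classical fact that for any simplicial vector space $V$ the degenerate subcomplex $\mathrm{D}V\subseteq\mathrm{C}V$ is acyclic and a direct summand, so that the quotient map $\mathrm{C}V\to\mathrm{C}V/\mathrm{D}V$ is a quasi-isomorphism. Applying this with $V=\homology_q\S_h$ and using the definition $\mathcal{G}_q=\mathrm{C}(\homology_q\S_h)/\mathrm{D}(\homology_q\S_h)$ gives $\page^2_{p,q}=\homology_p\mathrm{C}(\homology_q\S_h)\cong\homology_p\mathcal{G}_q$, naturally in $h$. For convergence I would use that $\F\S_h$ is a first-quadrant double complex, hence its spectral sequence converges to $\homology_\ast\Tot\F\S_h$ with $\page^\infty_{p,q}$ the associated graded of a finite filtration of $\homology_{p+q}\Tot\F\S_h$. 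The Dold--Puppe theorem quoted in Section~\ref{section:ss} identifies $\homology_\ast\Tot\F\S_h$ with $\homology_\ast\diag\S_h$, while Theorem~\ref{intro:mainresult} gives $\diag\S_h\simeq X$; chaining these isomorphisms yields $\homology_p\mathcal{G}_q\Rightarrow\homology_{p+q}X$.

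The only point requiring genuine care is keeping the two simplicial directions apart, i.e.\ confirming that the first-page differential is the Moore differential of the \emph{horizontal} simplicial vector space $\homology_q\S_h$, and then invoking the normalization theorem in the precise form $\mathrm{C}V\simeq\mathrm{C}V/\mathrm{D}V$ to pass from the Moore complex to the non-degenerate complex. The remaining ingredients — the spectral sequence of a first-quadrant double complex, the Dold--Puppe equivalence, and Theorem~\ref{intro:mainresult} — are all available off the shelf, so I do not expect a serious obstacle.
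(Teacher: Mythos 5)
Your proposal is correct and follows essentially the same route as the paper: identify the $q$th row of the first page with the Moore complex of the horizontal simplicial vector space $\homology_q\S_h$, use the normalization theorem to pass to the non-degenerate quotient $\mathcal{G}_q$ and obtain $\page^2_{p,q}\simeq\homology_p\mathcal{G}_q$, and get convergence by combining the first-quadrant double-complex spectral sequence with the Dold--Puppe equivalence and Theorem~\ref{intro:mainresult}. Your write-up is only slightly more explicit than the paper's about the convergence bookkeeping; there is no substantive difference.
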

\begin{proof}
Theorem~\ref{intro:mainresult} tells us that~$\diag\S_h$ is homotopy equivalent, hence homology equivalent, to~$X$. So it only remains to verify that~$\page^2_{p,q}\simeq \homology_p\mathcal{G}_q$. The~$p$th entry of the~$q$th Reeb complex~$\mathcal{G}_q$ is 
\[
(\mathcal{G}_q)_p=\bigoplus \homology_p \S_h[\bar{a}]
\]
ranging over all increasing sequences in~$\R_p$, whereas~$\page^1_{p,q}$ ranges over all non-decreasing sequences. Hence we observe that~$\mathcal{G}_q$ is the non-degenerate complex~$\page^1_{-,q}/\mathrm{D}\page^1_{-,q}$. It is well-known that~$\page^1_{-,q}$ is chain homotopic to~$\page^1_{-,q}/\mathrm{D}\page^1_{-,q}$, see e.g.~\cite[p.150]{goerss2009simplicial}. In particular,~$\page^2_{p,q}=\homology_p \page^1_{-,q}$ is isomorphic to~$\homology_p\mathcal{G}_q=\homology_p\page^1_{-,q}/\mathrm{D}\page^1_{-,q}$.
\end{proof}

To summarize Proposition~\ref{prop:spectralsequence}: It does not matter if we exchange the~$q$th row~$\page^1_{-,q}$, including all non-decreasing sequences, with the Reeb complex~$\mathcal{G}_q$ (including only increasing sequences). Note the importance of this fact for making computations with section complexes feasible within finite time. 
This is perhaps best illustrated through some simple examples:

\begin{example}
In Example~\ref{example:sh} we identified the first page of the section spectral sequence of~$X = \Delta^2\coprod_{\partial\Delta^2}\Delta^2$ with a single row -- the Reeb complex~$\mathcal{G}_0$. Hence, the differentials on the second page must be zero and we conclude that the homology of~$X$ and~$\mathcal{G}_0$ coincide. In Example~\ref{example:subdivision}, where the 2-simplices were subdivided prior to gluing, we are left with two non-zero rows on the first page:~$\mathcal{G}_0$ and~$\mathcal{G}_1$. Moreover, the only non-trivial entries of both~$\mathcal{G}_0$ and~$\mathcal{G}_1$ are in~$p=0,1$, implying that the differentials on the second page must be equal to zero. We calculate~$\homology_p\mathcal{G}_q$ and thus the second page in coordinates:
\begin{center}
\begin{tikzpicture}
	\draw (0,0) -- (6,0);
	\draw (0,0) -- (0,4);
	
	\node [] at (1,1) {$k$};
	\node [] at (1,2) {$0$};
	\node [] at (1,3) {$0$};
	\node [] at (2.5,1) {$0$};
	\node [] at (2.5,2) {$k$};
	\node [] at (2.5,3) {$0$};
	\node [] at (4,1) {$0$};
	\node [] at (4,2) {$0$};
	\node [] at (4,3) {$0$};

	\node [right] at (6,0) {$p$};
	\node [above] at (0,4) {$q$};
	
	\draw (-0.5, 4) -- (5.875, -0.25);
	\node [below] at (5.875, -0.25) {$\homology_2$};
	
	\draw (-0.5, 3) -- (4.375, -0.25);
	\node [below] at (4.375, -0.25) {$\homology_1$};
	
	\draw (-0.5, 2) -- (2.875, -0.25);
	\node [below] at (2.875, -0.25) {$\homology_0$};
\end{tikzpicture}
\end{center}
Since the sequence has converged, we extract~$\homology_0 X = k$,~$\homology_2 X=k$ and~$\homology_n X=0$ otherwise.
\end{example}

It was not a coincidence that the section spectral sequence from Example~\ref{example:subdivision} converged on the second page, as we shall make precise.
\begin{definition}
For a height function~$h\colon X\rightarrow \R$, we introduce the \emph{subdivision number} as the biggest~$n$ for which there is an increasing sequence~$\bar{a}=(a_0,\dots,a_n)$ such that the section space~$\S_h[\bar{a}]$ is non-empty.
\end{definition}

\begin{proposition}
Let~$h\colon X\rightarrow \R$ be a height function with subdivision number~$s$. The section spectral sequence collapses at the~$(s+1)$st page:~$\page^{n+s}_{p,q}\simeq \page^s_{p,q}$ for all~$n\geq 0$.
\end{proposition}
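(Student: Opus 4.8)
The plan is to obtain the collapse from the identification $\page^2_{p,q}\cong\homology_p\mathcal{G}_q$ of Proposition~\ref{prop:spectralsequence} together with the standard fact that a first-quadrant spectral sequence whose second page is supported in finitely many columns degenerates once those columns have been exhausted by the differentials. So the first task is to locate the support of $\page^2$, and here the subdivision number $s$ is exactly the right bound.

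To make that precise I would start from the description of the Reeb complex in Example~\ref{example:explicitReebComplex}: $(\mathcal{G}_q)_p=\bigoplus_{\bar a}\homology_q\S_h[\bar a]$, the sum running over the \emph{increasing} sequences $\bar a=(a_0,\dots,a_p)$ in $\R_p$. By the very definition of the subdivision number, an increasing sequence of length $p+1>s+1$ cannot have a non-empty section space --- if it did, $s$ would not be the largest such exponent. Hence $(\mathcal{G}_q)_p=0$ for $p>s$, so $\mathcal{G}_q$ is concentrated in chain degrees $0,\dots,s$ and in particular $\homology_p\mathcal{G}_q=0$ for $p>s$. By Proposition~\ref{prop:spectralsequence} this says $\page^2_{p,q}=0$ for $p>s$, and since each later page is a subquotient of its predecessor, $\page^r_{p,q}=0$ for $p>s$ and every $r\geq 2$.

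The remaining step is formal. The $r$th differential has shape $\partial^r_{p,q}\colon\page^r_{p,q}\to\page^r_{p-r,\,q+r-1}$, so as soon as $r\geq s+1$ a non-zero source is confined to a column $p\leq s<r$ and the target falls in column $p-r<0$, which is zero by the first-quadrant hypothesis. Thus $\partial^r=0$ for every $r\geq s+1$, whence $\page^{r+1}_{p,q}=\page^r_{p,q}$ for all $r\geq s+1$; that is, the section spectral sequence has stabilised by its $(s+1)$st page, as claimed. In combination with the convergence of Proposition~\ref{prop:spectralsequence} this moreover identifies $\page^{s+1}$ with $\page^\infty$, so the homology of $X$ is read off already from that page.

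I do not anticipate a genuine difficulty in this proof; it is essentially a bookkeeping argument about the shape of the differentials. The single point that deserves care --- and the only real content --- is not to apply the column bound to the first page: $\page^1_{p,q}=\bigoplus_{\bar a\in\R_p}\homology_q\S_h[\bar a]$ ranges over \emph{all} non-decreasing sequences, so it still receives contributions from constant (degenerate) sequences and is in general non-zero for arbitrarily large $p$. The bounded behaviour that forces the collapse appears only after quotienting out degeneracies to form $\mathcal{G}_q$, which is precisely why Proposition~\ref{prop:spectralsequence}, rather than the raw first page, is the right input.
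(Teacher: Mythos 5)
Your proof is correct and takes essentially the same route as the paper's: bound the column support of the spectral sequence by the subdivision number $s$ and note that every differential on a page $r\geq s+1$ either starts at a vanishing entry or lands outside the first quadrant. The only difference is that you derive the vanishing from the second page via Proposition~\ref{prop:spectralsequence}, carefully sidestepping the degenerate sequences that keep $\page^1_{p,q}$ nonzero for large $p$, whereas the paper asserts the vanishing already on the first page (which is accurate only after dividing out degeneracies).
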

\begin{proof}
From the assumption, it follows that every Reeb complex~$\mathcal{G}_q$ (Example~\ref{example:explicitReebComplex}) has trivial entries above~$s$. Hence, the first page consists of zeros for~$p\geq s+1$ and the differentials on the~$(s+1)$st page must all terminate outside of the first quadrant; be equal to zero.
\end{proof}

Thus, the number of pages we need to compute is bounded by how subdivided~$X$ is relative to~$h\colon X\rightarrow \R$.

\begin{example}
\label{example:nontraivialSecondPage}
Construct a simplicial cylinder~$X$ by gluing together the leftmost and rightmost vertical~$1$--simplices in:
\begin{center}
\begin{tikzpicture}

    \draw[fill=green!20] (0,0) -- (2,0) -- (2,2) -- cycle;
    \draw[fill=gray!20] (0,0) -- (2,2) -- (2,4) -- cycle;
    \draw[fill=orange!20] (0,0) -- (2,4) -- (0,4) -- cycle;

    \draw[fill=green!20] (0,0) -- (-2,0) -- (-2,2) -- cycle;
    \draw[fill=gray!20] (0,0) -- (-2,2) -- (-2,4) -- cycle;
    \draw[fill=orange!20] (0,0) -- (-2,4) -- (0,4) -- cycle;
    
    \draw[fill=green!20] (-2,0) -- (-4,0) -- (-4,2) -- cycle;
    \draw[fill=green!20] (-2,0) -- (-4,2) -- (-2,2) -- cycle;
    \draw[fill=purple!20] (-2,2) -- (-4,2) -- (-4,4) -- cycle;
    \draw[fill=purple!20] (-2,2) -- (-4,4) -- (-2,4) -- cycle;
    
    \draw[->] (4,-0.5) -- (4,4.5);
    \node[right] at (4,0) {$0$};
    \node[right] at (4,2) {$1$};
    \node[right] at (4,4) {$2$};
    \node[above] at (4,4.5) {$\mathbb{R}$};

\end{tikzpicture}
\end{center}
A height function~$h\colon X\rightarrow \R$ is indicated by the right-hand values. Each section space of the form~$\S_h[a_0]$, equal to~$h^{-1}a_0$ for~$ a_0 = 0,1,2$, has one connected component. There are three~$1$--section spaces~$\S_h[a_0,a_1]$, all of which have a single connected component indicated by the simplices colored in green, orange and purple above. The section space~$\S_h[0,1,2]$ has two connected components represented by the gray~$2$--simplices. Only~$\S_h[0]$ and~$\S_h[2]$ have generators in~$\homology_1$, obtained by following the horizontal lines at the bottom and top of the cylinder. We mimic the calculations in Example~\ref{example:subdivision} to deduce
\[
\partial^1_{1,0} =\begin{bmatrix}
    -1 & -1 & 0\\
    1 & 0 & -1\\
    0 & 1 & 1
\end{bmatrix}\;\text{ and }\;
\partial^1_{2,0} =\begin{bmatrix}
    1 & 1 \\
    -1 & -1\\
    1 & 1 
\end{bmatrix}
\]
which gives the first page:
\begin{center}
\begin{tikzpicture}
	\draw (0,0) -- (6,0);
	\draw (0,0) -- (0,3);
	
	\node [] at (1,1) {$k^3$};
	\node [] at (3,1) {$k^3$};
	\node [] at (5,1) {$k^2$};
	\node [] at (1,2.5) {$k^2$};
	\node [] at (3,2.5) {$0$};
	\node [] at (5,2.5) {$0$};
	
	\draw[->] (2.75,1) -- (1.25,1); 
	\draw[->] (4.75,1) -- (3.25,1); 
	
	\node[above] at (2,1) {$\partial^1_{1,0}$};
	\node[above] at (4,1) {$\partial^1_{2,0}$};

	\node [right] at (6,0) {$p$};
	\node [above] at (0,3) {$q$};
	
\end{tikzpicture}
\end{center}
By computing homology again we obtain the second page:
\begin{center}
\begin{tikzpicture}
	\draw (0,0) -- (6,0);
	\draw (0,0) -- (0,3);
	
	\node [] at (1,1) {$k$};
	\node [] at (3,1) {$0$};
	\node [] at (5,1) {$k$};
	\node [] at (1,2.5) {$k^2$};
	\node [] at (3,2.5) {$0$};
	\node [] at (5,2.5) {$0$};
	
	\draw[->] (4.75,1.1) -- (1.25,2.25); 
	
	\node[above] at (4,1.25) {$\partial^2_{2,0}$};

	\node [right] at (6,0) {$p$};
	\node [above] at (0,3) {$q$};
	
\end{tikzpicture}
\end{center}
The sequence must collapse on the next page, and, as the homology of~$X$ is not calculated yet, the differential cannot be zero. The representative in~$\page^2_{2,0}=k$ is given by the difference of the two gray~$2$--simplices. The alternating sum of the surrounding~$1$--simplices is in the image of~$\partial_v$. Geometrically, this happens by applying~$\partial_v$ to the sum of all~$2$--simplices (i.e.~$1$--simplices in the section spaces~$\S_h[a_0,a_1]$) not colored gray. Applying~$\partial_h$ to this sum gives the difference of the generators in~$\page^2_{0,1}\simeq k^2$. As an example, the top generator is obtained from the sum of the target of the purple and orange~$2$--simplices. We can thus conclude that~$\partial^2_{2,0}$ is the transpose of~$\begin{bmatrix}
1 & -1\\
\end{bmatrix}$. The third page only has two non-zero entries:~$\page^3_{0,0}\simeq k$ and~$\page^3_{0,1}\simeq k$. In particular, we calculate~$\homology_0 X=\homology_1 X= k$ and~$\homology_n X=0$ otherwise.
\end{example}

\section{Comparison to the continuous case}
\label{section:topologicalspaces!?}

In Section \ref{section:sectionsofheightfunction}, we saw that a height function $h \from X \to \R$ always associates to a piecewise linear function~$f\colon |X|\rightarrow \mathbb{R}$. Example \ref{example:standard2} illustrated that, in general, the topological space of sections $\mathrm{Sect}_f$ from \cite{trygsland2021} and the simplicial space of~$1$--sections $(\Sh)_1$ are significantly different.  A topological section of the form~$[a,b]\rightarrow T$ factorizes into smaller sections defined on~$[a,c]$ and~$[c,b]$ for any real number~$a\leq c\leq b$. Conversely, two sections~$\rho\colon [a,c]\rightarrow T$ and~$\tau \colon [c,b]\rightarrow T$ compose to a section on~$[a,b]$ via a canonical concatenation. This means that in contrast to the simplicial sections in~$(\Sh)_1$, the topological sections are automatically subdivided.
In particular, the spectral sequence obtained from~$\mathrm{Sect}_f$ terminates on the second page, reflecting the fact that all information about the homology of $T$ is contained in~$\mathrm{Sect}_f$. 
This is not true for~$(\Sh)_1$ in general, which led us to introduce higher sections. Consider now the case where the simplicial set $X$ is subdivided according to~$h$ (Definition  \ref{definition:subdivided}). Then we observed, in Section \ref{section:sectionspectralsequence}, that the spectral sequence associated to $\Sh$ terminates on the second page as well. Whenever $X$ is subdivided according to $h$, we can thus expect the space of~$1$--sections~$(\Sh)_1$ to contain the same homological information as the topological section space $\mathrm{Sect}_f$. The rest of this section is about making this observation into a formal statement which finally leads to a proof of Proposition \ref{proposition:comparereeb}. \\

 Generally, for fixed real values~$a \leq b$ we can define a map from the realization~$|\S_h[a,b]|$ to the space~$\mathrm{Sect}_f[a,b]$ as follows. A point in~$|\S_h[a,b]|$ is a class~$[\rho, \bar{t}]$ with~$\rho\colon \Delta^1\times \Delta^n\rightarrow X$ an~$n$--simplex in~$\S_h[a,b]$ and~$\bar{t}$ a point in the standard topological~$n$--simplex~$|\Delta^n|$. If we realize~$\rho$, then we obtain a continuous function~$|\rho|\colon |\Delta^1|\times |\Delta^n|\rightarrow |X|$ which hinges upon the existence of a homeomorphism~$|\Delta^1\times \Delta^n|\simeq |\Delta^1|\times |\Delta^n|$. For a fixed~$\bar{t}$, the restriction of~$|\rho|$ to~$|\Delta^1|\times \bar{t}$ is a section of~$f$ up to the linear orientation-preserving homeomorphism~$L_{a,b}\colon [a,b]\rightarrow |\Delta^1|$. Indeed, the composition~$h\circ \rho$ maps the unique non-degenerate~$1$--simplex in~$\Delta^1$ to~$a\leq b$ in~$\R$ regardless of its second component. See Definition~\ref{definition:ksections}. It follows that~$|h|\circ |\rho||_{|\Delta^1|\times \bar{t}}$ identifies~$|\Delta^1|$ with the~$1$--cell labeled by~$a\leq b$ in~$|X|$. Whence we define a continuous function~$\Phi_h\colon |\S_h[a,b]| \rightarrow \mathrm{Sect}_f[a,b]$ from the formula~$\Phi_h[\rho,\bar{t}]=|\rho|\circ (L_{a,b},\bar{t})$.

\begin{example}
\label{example:comparison}
Consider the height function~$h\colon \Delta^2\coprod_{\partial\Delta^2} \Delta^2 \rightarrow \R$ from Example~\ref{example:sh}. The section space~$\S_h[0,1]$ only consists of a single point represented by the~$1$--simplex~$0\rightarrow 1$, whereas the topological version~$\mathrm{Sect}_f[0,1]$ is a circle. Hence, the map~$\Phi_h$ cannot be a weak equivalence. While, if we subdivide~$\Delta^2\coprod_{\partial\Delta^2} \Delta^2$ as in Example~\ref{example:subdivision}, then~$\S_h[0,1]$ is a circle and~$\Phi_h$ is a weak equivalence.
\end{example}
\begin{proposition}
\label{proposition:comparisonWithTopological}
Assume that~$X$ is subdivided according to~$h\colon X\rightarrow \R$. For every pair of successive height levels~$a\leq b$, the continuous function~$\Phi_h\colon |\S_h[a,b]| \rightarrow \mathrm{Sect}_f[a,b]$ is a homology equivalence.
\end{proposition}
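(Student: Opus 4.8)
The plan is to prove the stronger statement that $\Phi_h$ is a weak homotopy equivalence (the homology statement then follows), by first localising the problem and then inducting on the spanning simplices, comparing homotopy pushout squares on the two sides.

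\textbf{Reduction.} Because $a<b$ are successive height levels and $X$ is subdivided according to $h$, a short argument shows that no simplex of $X$ can have vertices at three or more distinct heights, or at two non-adjacent heights: such a simplex would have a $1$--face that is a $1$--section skipping a level, contradicting subdivision. Hence every simplex of $X$ is flat over $a$, flat over $b$, or \emph{spanning} (at least one vertex of each height). Let $Y\subseteq X$ be the sub-simplicial set on the simplices with vertices in $\{a,b\}$. Then $|Y|=f^{-1}[a,b]$ on the nose, the restriction $f|_{|Y|}$ is a PL function with no interior height levels, and $\Sh[a,b]$, $\mathrm{Sect}_f[a,b]=\mathrm{Sect}_{f|_{|Y|}}[a,b]$ and $\Phi_h$ all depend only on $Y$; so we may assume $X=Y$.

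\textbf{Induction on spanning cells.} I would induct on the number of non-degenerate spanning simplices of $Y$. If there are none there are no $1$--sections and no topological sections, so both spaces are $\varnothing$ and $\Phi_h$ is vacuously an equivalence. For the inductive step, choose a top-dimensional spanning simplex $\tau$; then $Y'\coloneqq Y\smallsetminus\tau$ is again a subcomplex subdivided according to $h$, with one fewer spanning simplex, and $\partial^{\mathrm{sp}}\tau$, the union of the spanning proper faces of $\tau$, is a subcomplex with strictly fewer spanning simplices than $Y$.

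\textbf{Comparison of pushouts.} The key claim is that, up to equivalence, both $|\Sh[a,b]|$ and $\mathrm{Sect}_f[a,b]$ are the homotopy pushout of
\[
\bigl(\text{the }Y'\text{--version}\bigr)\;\longleftarrow\;\bigl(\text{the }\partial^{\mathrm{sp}}\tau\text{--version}\bigr)\;\longrightarrow\;C ,
\]
where $C\simeq\ast$. Combinatorially, the $(1,q)$--sections of $Y$ not hitting $\tau$ are exactly those of $Y'$, while those hitting $\tau$ assemble — via the prism decomposition of $\Delta^1\times\Delta^{q}$ into its $q+1$ shuffles — into a cone on the $(1,q)$--sections of $\partial^{\mathrm{sp}}\tau$. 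Topologically, the sections meeting the open cell $|\tau|$ deformation retract onto the sections contained in the single closed cell $\overline{|\tau|}$; since $\tau$ spans exactly two adjacent levels, the height slices of $|\tau|$ are products of simplices, so this is a space of sections of a bundle of convex sets and is contractible. Now $\Phi_h$ carries $(1,q)$--sections hitting $\tau$ to families of sections meeting $|\tau|$ and restricts correctly on the other corners, so it is a map of homotopy pushout squares; it is an equivalence on the $Y'$--corner by the outer induction, on the $\partial^{\mathrm{sp}}\tau$--corner by the (lower-complexity) inner induction, and on both cones by contractibility. Hence it is an equivalence on the pushout, closing the induction. (The common model produced this way is $\nerve\mathcal{P}$, the nerve of the face poset $\mathcal{P}$ of spanning simplices of $Y$, which reproduces the homotopy types seen in Examples~\ref{example:sh}, \ref{example:subdivision} and~\ref{example:nontraivialSecondPage}, and which also explains why subdivision is exactly what makes $\Phi_h$ an equivalence in Example~\ref{example:comparison}.)

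\textbf{Main obstacle.} The real work is making the two pushout decompositions honest and compatible under $\Phi_h$. On the combinatorial side this is the cone claim, which requires careful bookkeeping with the shuffles and degeneracies of $\Delta^1\times\Delta^{q}$. On the topological side it requires the standard CW open-cover version of the cell attachment together with a \emph{relative straightening} deformation retraction — sections that barely touch $|\tau|$ must be pushed, coherently and continuously, onto sections of $\partial^{\mathrm{sp}}\tau$, and the whole of $C$ onto $\mathrm{Sect}_{f|\overline{|\tau|}}[a,b]$, all without ever violating $f\circ\rho=\mathrm{incl}$. This is a simplicial-approximation-type argument and is the precise place where the subdivision hypothesis is indispensable. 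Once both sides are rigidly pinned to $\nerve\mathcal{P}$ and $\Phi_h$ is seen to realise that identification, the conclusion — and in particular the asserted homology equivalence — is formal.
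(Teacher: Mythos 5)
Your reduction to the two-level subcomplex $Y$ and the contractibility of the single-simplex pieces are fine, but the central step --- that $|\Sh[a,b]|$ and $\mathrm{Sect}_f[a,b]$ both decompose as the asserted homotopy pushouts after removing one top-dimensional spanning simplex $\tau$ --- is not established, and on the simplicial side it is false in the form you state it. The $q$--simplices of $\Sh[a,b]$ are maps out of the connected prism $\Delta^1\times\Delta^q$, so the functor sending $Y$ to $\Sh[a,b]$ does not carry the attachment square $Y=Y'\cup\bar\tau$ to a pushout: the collection of sections hitting $\tau$ is not closed under faces, and, worse, a single $(1,q)$--section can hit $\tau$ and a second maximal spanning simplex simultaneously, so it lies neither in the $Y'$--corner nor in any cone on the sections of $\partial^{\mathrm{sp}}\tau$. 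Concretely, let $Y=\Delta^2\coprod_{\Delta^1}\Delta^2$ be two $2$--simplices $\sigma_1=(v_0\to w_0\to w_1)$ and $\sigma_2=(v_0\to v_1\to w_1)$ glued along their $d_1$--faces $v_0\to w_1$, with $h(v_i)=a$ and $h(w_i)=b$; this $Y$ is subdivided, and the $(1,1)$--section sending the two shuffles of $\Delta^1\times\Delta^1$ to $\sigma_1$ and $\sigma_2$ hits $\tau=\sigma_1$ but factors neither through $Y'$ nor through $\bar\sigma_1$, and one of its vertical faces is the $0$--section $v_1\to w_1$, which does not belong to $\partial^{\mathrm{sp}}\sigma_1$ at all. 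The same phenomenon afflicts the topological half: the subspace of sections meeting the open cell $|\tau|$ contains sections running through arbitrary other cells, and the ``relative straightening'' retraction you invoke is exactly the excision-type statement carrying all the difficulty; it is announced, not proved. Two further points: the induction requires finitely many spanning simplices and the existence of a top-dimensional one, while the proposition carries no finiteness hypothesis (so a colimit argument would be needed on both sides), and the identification of both sides with $\nerve\mathcal{P}$, to which your argument is ultimately pinned, is itself unproven and essentially equivalent to the proposition.

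For comparison, the paper proves the statement globally, with no local decomposition: it reduces to two height levels, cones off the fibers (Lemma~\ref{lemma:contractfibers} shows $\Sh[0,1]$ is unchanged), uses Theorem~\ref{intro:mainresult} together with the continuous equivalence from \cite{trygsland2021} to see that $|\Phi_h|$ is a homotopy equivalence, and then compares the two spectral sequences, which degenerate at the second page by the subdivision hypothesis; contractibility of the fibers identifies $\homology_q$ of the $1$--section spaces for $q\geq 1$, and a five-lemma argument handles $q=0$. If you want to pursue your local route, the missing ingredient is a genuine excision or nerve-lemma statement for $\Sh[a,b]$ and for $\mathrm{Sect}_f[a,b]$ (for instance a homotopy colimit decomposition over the poset of spanning simplices, exploiting the two-level structure); without such a lemma the inductive comparison of pushout squares does not get off the ground.
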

We can assume without loss of generality that the only non-empty height levels of~$h$ are~$0$ and~$1$.  
The strategy for the proof is then to shift all homological information of~$X$ into the space~$\S_h[0,1]$. This can be done by filling out all the simplices in the fibers of $h$ by by means of the following pushout 
\begin{equation*}
    \begin{tikzcd}
        h^{-1}(0) \coprod h^{-1}(1) \arrow[r,"(0\text{,}\id)"] \arrow[d,hookrightarrow] & \coprod\limits_{a=0,1}\sfrac{\Delta^1 \times h^{-1}(a)}{(1\text{,}x) \sim (1\text{,}y)} \arrow[ddr,bend left=20,"a"]  \arrow[d] \\
        X \arrow[drr,bend right=20,"h"] \arrow[r] & \tilde{X} \arrow[dr,"\tilde{h}"] \\ 
        & & \R
    \end{tikzcd}
\end{equation*}
from that we in particular get an induced height function~$\tilde{h} \from \tilde{X} \to \R$. 
\begin{lemma}\label{lemma:contractfibers}
Let~$h\from X \to \R$ be a height function that only meets~$a=0$ and~$b=1$ and let~$\tilde{h}\from \tilde{X}\to \R$ be the replacement constructed above. Then 
\begin{equation*}
    \S_h[0,1] = \S_{\tilde{h}}[0,1].
\end{equation*}

\end{lemma}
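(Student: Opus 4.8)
The plan is to compare $n$--simplices directly. Both $\S_h[0,1]$ and $\S_{\tilde h}[0,1]$ are the pullbacks from Definition~\ref{definition:ksections} taken along $\bar a=(0,1)$, and since $\tilde h$ restricts to $h$ on the subcomplex $X\subseteq\tilde X$, the inclusion $X\hookrightarrow\tilde X$ induces a monomorphism $\S_h[0,1]\hookrightarrow\S_{\tilde h}[0,1]$. So the whole statement reduces to the reverse inclusion: every simplicial map $\rho\from\Delta^1\times\Delta^n\to\tilde X$ with $\tilde h\circ\rho=(0,1)\circ\pr_0$ must factor through $X$, the factorisation being then automatically unique and simplicial.

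The first step is a degreewise description of $\tilde X$. Colimits of simplicial sets are computed degreewise, so $\tilde X_k=X_k\sqcup N_0(k)\sqcup N_1(k)$, where $N_a(k)$ is the set of $k$--simplices of the cone $C_a:=(\Delta^1\times h^{-1}(a))\big/(\{1\}\times h^{-1}(a))$ that do not lie in the base copy $\{0\}\times h^{-1}(a)$. I would record two elementary facts: $\tilde h$ is constant equal to $a$ on $C_a$, so the cone point $c_a$ (the collapsed image of $\{1\}\times h^{-1}(a)$) has height $a$; and \emph{every} simplex in $N_a(k)$ has $c_a$ among its vertices, since it is either a degeneracy of $c_a$ or is represented by a pair $(\mu,y)$ with $\mu\from[k]\to\{0,1\}$ a non-constant monotone map, in which case the vertex at an index where $\mu=1$ equals $c_a$.

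The heart of the argument is to show $\rho$ sends no vertex of $\Delta^1\times\Delta^n$ to a cone point. The constraint forces $\rho(i,j)$ to have height $i\in\{0,1\}$, so, as $c_0$ and $c_1$ have heights $0$ and $1$ respectively, only $\rho(0,j)=c_0$ and $\rho(1,j)=c_1$ need to be excluded. For this I would apply $\rho$ to the edge of $\Delta^1\times\Delta^n$ from $(0,j)$ to $(1,j)$ (degenerate in the $\Delta^n$--factor): its image is a $1$--simplex $\gamma$ of $\tilde X$ with source at height $0$ and target at height $1$. But the only $1$--simplex of $\tilde X$ with source $c_0$ is the degenerate edge $s_0c_0$, whose target is again $c_0$ at height $0$; and every $1$--simplex of $\tilde X$ with target $c_1$ has its source at height $1$. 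Both cases are therefore impossible. Since $\rho$ then misses every cone point on vertices, the second fact above forces $\rho$ to miss all of $N_0(k)\sqcup N_1(k)$ in every degree, i.e.\ to factor through $X$; this gives the reverse inclusion and hence the equality. The only real work is the combinatorics in the second step --- pinning down $N_a(k)$, the cone--point--vertex claim, and the scarcity of $1$--simplices adjacent to $c_0$ and $c_1$ --- which amounts to routine computations inside the product $\Delta^1\times h^{-1}(a)$ and its quotient, so I expect no genuine obstacle.
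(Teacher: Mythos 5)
Your proof is correct and takes essentially the same route as the paper's: one direction comes from postcomposing with the inclusion $X\hookrightarrow\tilde X$, and for the converse you show that a section into $\tilde X$ cannot hit the cone points (hence, since every new simplex has a cone point as a vertex, cannot hit any simplex outside $X$) and therefore factors through $X$. Your edge-based argument merely spells out in detail the step the paper dispatches with ``this vertex is clearly no horizontal face of any section.''
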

\begin{proof}
Postcomposing a section~$\rho \in \S_h[0,1]$ with the inclusion
\begin{equation*}
\begin{tikzcd}
    \Delta^1 \times \Delta^n \arrow[rr,"\rho"] \arrow[rd,"\tilde{\rho}"'] & & \tilde{X} \\
    & X \arrow[ru,hookrightarrow] &
    \end{tikzcd}
\end{equation*}
yields a section~$\tilde{\rho} \in \S_{\tilde{h}}$. Moreover, starting with any~$\tilde{\rho} \in \S_{\tilde{h}}$ it always factorizes like that. Indeed, if we assume that for a given section~$\rho \in \S_{\tilde{h}}[0,1]$ such a factorization does not exist. Then the image of this~$\rho$ contains a simplex that is not in~$X$. This simplex must then lie either in~$f^{-1}(0)$ or in~$f^{-1}(1)$ and thus be a horizontal face of~$\rho$. Furthermore it contain one of the two vertices in~$\tilde{X}$ which are not in~$X$. But as this vertex is clearly no horizontal face of any section we get a contradiction. Thus every section~$\rho \in \S_{\tilde{h}}[0,1]$ factors through~$X$ giving us the desired isomorphism.
\end{proof}
We can now proof Proposition \ref{proposition:comparisonWithTopological} by reducing to the case of contractible fibers. 

\begin{proof}[Proof of Proposition \ref{proposition:comparisonWithTopological}]
Let~$h \from X \to \R$ be a height function that, without loss of generality, only meets the height levels~$0$ and~$1$ and for which the fibers are contractible. Denote by~$\mathrm{T}\S_h$ the levelwise realization of the section complex like in Remark \ref{remark:bisimplicialsets}. We can then extend~$\Phi_h$ to a morphism of simplicial spaces 
\begin{equation*}
    \Phi_h \from |T\S_h| \to \nerve \mathrm{Sect}_f
\end{equation*}
that acts as the identity on zero-simplices. It follows from standard theory that the realization $|T\S_h|$ is isomorphic to~$|\diag \S_h|$. We combine this fact with the homotopy equivalences from Theorem \ref{intro:mainresult} of this paper and from Theorem 1.1 of \cite{trygsland2021}. This yields a commutative diagram 
\begin{equation*}
    \begin{tikzcd}
        \lvert T\S_h \rvert  \arrow[r,"\cong"]  \arrow[d,"\lvert \Phi_h \rvert"] & \lvert \diag \S_h \rvert \arrow[d,"\sim"]\\ 
        \lvert \nerve \mathrm{Sect}_f \rvert \arrow[r,"\sim"] & \lvert X \rvert
    \end{tikzcd}
\end{equation*}
  and exhibits $|\Phi_h|$ to be a homotopy equivalence as well.
 Using the result by Dold and Puppe \cite{dold1961homologie,goerss2009simplicial}, that was already mentioned in Section \ref{section:ss} gives us the commutative square
 \begin{equation*}
     \begin{tikzcd}
     \homology_{\ast}\Tot \F \S_h \arrow[r] \arrow[d] & \homology_{\ast}|T\S_h| \arrow[d,"\homology_{\ast}|\Phi_h|"] \\
     \homology_{\ast}\Tot \nerve \mathrm{Sect}_f \arrow[r] & \homology_{\ast}|\nerve \mathrm{Sect}_f|
     \end{tikzcd}
 \end{equation*}
 where all the arrows are isomorphisms. 
Consider now the two spectral sequences associated to $S_h$ and $\nerve \mathrm{Sect}_f$ respectively.
Because $X$ is subdivided according to $h$ these both converge on the second page. Combine this with the contractability of the fibers of $h$ to obtain for $q \geq 1$ the following extension of the above diagram
     \begin{equation*}
     \begin{tikzcd}
     \homology_q(\S_h)_1 \arrow[d,"\homology_q \Phi_h"] \arrow[r] & \homology_{q+1}\Tot \F \S_h  \arrow[d]  \\
     \homology_q \mathrm{Sect}_f \arrow[r] & \homology_{q+1}\Tot \nerve \mathrm{Sect}_f  & 
     \end{tikzcd}
 \end{equation*}
from which we can conclude that~$\homology_q \Phi_h$ is an isomorphism for all~$q\geq 1$. 
  
 For $q=0$ we have to do some extra work. This is because the horizontal differential \begin{equation*}
      \partial^1_{1,0} \from E^1_{1,0} \to E^1_{0,0}
  \end{equation*}
  is non-trivial in both spectral sequences. Its kernel-cokernel pair however induces the diagram 
\begin{equation*}
    \begin{tikzcd}[scale cd =0.8]
        0 \arrow[r] \arrow[d,"\cong"] & \homology_1 \homology_0 (\S_h)_1 \arrow[d,"\homology_1\homology_0 \Phi_h"] \arrow[r] & \homology_{0} (\S_h)_1 \arrow[d,"\homology_0\Phi_h"'] \arrow[r,"\partial^1_{1,0}"] & \homology_{0} (h^{-1}(0)\sqcup h^{-1}(1))  \arrow[d,"\cong"] \arrow[r] & \homology_0 \homology_0 (h^{-1}(0)\sqcup h^{-1}(1))  \arrow[d,"\homology_0 \homology_0 \Phi_h"] \\
        0 \arrow[r] & \homology_1 \homology_0 \mathrm{Sect}_f  \arrow[r] & \homology_{0} \mathrm{Sect}_f  \arrow[r,"\partial^1_{1,0}"] & \homology_{0} (f^{-1}(0)\sqcup f^{-1}(1))  \arrow[r] & \homology_0 \homology_0 (f^{-1}(0)\sqcup f^{-1}(1)) 
    \end{tikzcd}
\end{equation*}
where the horizontal rows are exact. Using a similar argument as for $\homology_q \Phi_h$ above we see that $\homology_1\homology_0 \Phi_h$ and $\homology_0\homology_0 \Phi_h$ are isomorphisms. An application of the five lemma exhibits $\homology_0\Phi_h$ as an isomorphism as well and thus concludes the proof for the case of contractible fibers. The more general case follows now with Lemma \ref{lemma:contractfibers} 
\end{proof}

Recall now the truncated Reeb complex~$\truncreeb_q^f$ from \cite{Vaupel_TDA_2022} that was already mentioned in the introduction. As an immediate consequence of Proposition~\ref{proposition:comparisonWithTopological} we finally get: 
\begin{corollary}\label{corollary:reebcomplexes}
Let~$h \from X \to \R$ be a simplicial height function, such that~$X$ is subdivided according to~$h$. Then~$h$ associates to a piecewise linear function~$f \from |X| \to \mathbb{R}$ and there is an induced isomorphism of chain complexes between~$\reeb_h$ and the continuous truncated Reeb complex~$\truncreeb_q$.
\end{corollary}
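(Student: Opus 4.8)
The plan is to prove the isomorphism levelwise, with Proposition~\ref{proposition:comparisonWithTopological} as the essential input, and only afterwards to check compatibility with the differentials. First I would record the shape of both complexes. Since $X$ is subdivided according to $h$, Example~\ref{example:explicitReebComplex} together with Definition~\ref{definition:subdivided} shows that $\mathcal{G}_q$ is concentrated in degrees $0$ and $1$, namely $\bigoplus_a \homology_q \S_h[a] \xleftarrow{\partial} \bigoplus_{a<b} \homology_q \S_h[a,b]$, where the left sum runs over the height levels of $h$ and the right sum over \emph{successive} pairs of height levels (all other section spaces being empty), and where $\partial = \homology_q d^h_0 - \homology_q d^h_1$ is the difference of the target and source restriction maps. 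On the other side I would unwind the definition of the truncated Reeb complex $\truncreeb_q$ of \cite{Vaupel_TDA_2022}: it is by construction the two-term complex $\bigoplus_a \homology_q f^{-1}(a) \xleftarrow{\partial'} \bigoplus_{a<b} \homology_q \mathrm{Sect}_f[a,b]$, the sums ranging over the (adjacent) critical levels of $f$, which are exactly the successive height levels of $h$, with $\partial'$ the difference of the target and source evaluation maps $\mathrm{Sect}_f[a,b] \to \mathrm{Sect}_f[b,b] = f^{-1}(b)$ and $\mathrm{Sect}_f[a,b] \to \mathrm{Sect}_f[a,a] = f^{-1}(a)$.

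Next I would produce the comparison. In degree $1$, Proposition~\ref{proposition:comparisonWithTopological} gives that $\Phi_h\colon |\S_h[a,b]| \to \mathrm{Sect}_f[a,b]$ is a homology equivalence for every pair $a<b$ of successive height levels, so $\homology_q\Phi_h$ is an isomorphism; summing over all such pairs yields the degree-$1$ isomorphism. In degree $0$ the identification is essentially tautological: the $0$--section space $\S_h[a]$ is the simplicial fibre $h^{-1}(a)$, and subdivision forces the PL fibre $f^{-1}(a) = \mathrm{Sect}_f[a,a]$ over the critical level $a$ to be precisely the realization $|h^{-1}(a)|$, so applying $\homology_q$ and summing over height levels gives the degree-$0$ isomorphism.

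It then remains to check that these two levelwise isomorphisms intertwine $\partial$ and $\partial'$, i.e. that the evident square of $\homology_q$'s commutes. For this I would invoke the explicit formula $\Phi_h[\rho,\bar t] = |\rho|\circ(L_{a,b},\bar t)$: precomposing with the inclusion of an endpoint of $|\Delta^1|$ corresponds, on the simplicial side, to applying $d^h_1$ (source) or $d^h_0$ (target) to $\rho$, and, on the topological side, to source resp. target evaluation on $\mathrm{Sect}_f[a,b]$; since $\Phi_h$ acts as the identity on $0$--sections, as already noted in the proof of Proposition~\ref{proposition:comparisonWithTopological}, these squares of spaces commute, and applying $\homology_q$ finishes the verification. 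Assembling the degree-$0$ and degree-$1$ isomorphisms then produces the asserted isomorphism of chain complexes.

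The main obstacle is not conceptual but a matter of matching conventions: one must make sure the indexing, the source/target orientation, and the boundary sign of $\truncreeb_q$ in \cite{Vaupel_TDA_2022} agree with the horizontal face maps $d^h_0, d^h_1$ used here, and that no terms of degree $\geq 2$ survive on either side -- which they do not, since subdivision kills $(\mathcal{G}_q)_p$ for $p\geq 2$ and the continuous truncated spectral sequence already collapses at the second page. Once these bookkeeping identifications are in place, the corollary is immediate from Proposition~\ref{proposition:comparisonWithTopological}.
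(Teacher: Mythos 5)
Your proposal is correct and follows the same route as the paper, which simply states the corollary as an immediate consequence of Proposition~\ref{proposition:comparisonWithTopological}: the degree-$1$ part is exactly that proposition summed over successive height levels, the degree-$0$ part is the tautological identification of fibers, and the compatibility with the differentials is the bookkeeping the paper leaves implicit. Your version just spells out these details explicitly, which is a faithful expansion rather than a different argument.
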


\section{Some final remarks and possible future directions}


We close this paper with two final remarks. The first concerns the relation of the section complex with the \textit{flow category} as defined in \cite{nanda2018discrete}. The flow category incorporates flow paths of a discrete Morse function on a simplicial complex. In a certain sense, it can be understood as a discrete variant of the construction from \cite{cohen1995morse}. We expect that there is an intimate relation with the theory developed in this paper and believe it could be a fruitful endeavour to make this relation precise.  \\

The second remark concerns our choice of modelling language in this paper - the simplicial sets.  
It is a common theme in applied topology to develop the theory in the category of topological spaces and then, in a second stage, conceive implementable algorithms. While these algorithms are inspired by the theory, their construction is often non-trivial and comes with its own set of complications. If we followed this paradigm, the present paper could have had a very different form. E.g., we could have built on the theory of \cite{trygsland2021} and discussed algorithms for discrete computations with topological Reeb complexes by approximating them with simplicial complexes. Instead, we decided to develop a theory of sections for simplicial sets. While this theory is certainly inspired by the continuous version, it can also stand on its own feet. This is possible because simplicial sets provide a model for spaces that is equal in power to the continuous one. There is a rich simplicial homotopy theory and often a clean categorical treatment in terms of universal constructions is made possible by the fact that simplicial sets are presheaves. If need be, one can translate to topological spaces in terms of the Quillen equivalence given by the realization functor. Furthermore, we note that the theory of simplicial sets contains that of simplicial complexes as a special case. \\
All this makes us believe that it can be beneficial to formulate theories of relevance to computational topology directly in simplicial sets. While still modelling the homotopy type of all spaces, such a theory is inherently combinatorial. This can make the subsequent development of implementable algorithms straightforward and streamlined with the theory. Furthermore, many of the intended applications may be of a discrete nature anyways. Using simplicial sets as the preferred modelling language might reveal important phenomena intrinsic to these discrete systems. An example that came up in the present paper are the \textit{higher section spaces}. These were important through the presence of higher differentials in the general section spectral sequence. Contrast this with the continuous theory, where the spectral sequence converges on the second page. It is conceivable that higher section spaces prove to be more then a technicality and provide a valuable tool in the modelling and analysis of discrete height functions.

\section*{Acknowledgements}
We would like to thank our supervisors Benjamin Dunn and Markus Szymik for their valuable input and the kind and motivating encouragement. This work was financially supported from the Department of Mathematical Sciences at the NTNU and from NTNU's Enabling Technologies Biotechnology program.

\newpage

\addcontentsline{toc}{section}{References}
\bibliographystyle{amsalpha}
\bibliography{SectionComplexes}

\end{document}